\newcommand{\NN}{\mathbb N}
\newcommand{\RR}{\mathbb R}
\newcommand{\ZZ}{\mathbb Z}
\numberwithin{equation}{section}
\def \I{\text{I}}
\def \II{\text{II}}
\def \III{\text{III}}
\def \IV{\text{IV}}
\def \V{\text{V}}
\def \dd {\partial}
\def \eps {\varepsilon}
\DeclareMathOperator{\id}{id}
\title{Fundamental gaps of spherical Triangles }
\author{Shoo Seto}
\address[Shoo Seto]{Department of Mathematics\\
	 California State University\\
Fullerton, CA 92831}
\email{\href{mailto:shoseto@fullerton.edu}{shoseto@fullerton.edu}}
\author{Guofang Wei}
\address[Guofang Wei]{Department of Mathematics\\
	University of California\\
	Santa Barbara, CA 93106}
\email{\href{mailto:wei@math.ucsb.edu}{wei@math.ucsb.edu}}
\thanks{The second author is partially supported by NSF DMS-1811558}
\author{Xuwen Zhu}
\address[Xuwen Zhu]{Department of Mathematics\\
Northeastern 	University\\
Boston, MA 02115}
\email{\href{mailto:x.zhu@northeastern.edu}{x.zhu@northeastern.edu}}
\thanks{The third author is partially supported by NSF DMS-2041823}
\date{}
\theoremstyle{definition}
\newtheorem{lemma}{Lemma}[section]
\newtheorem{remark}{Remark}[section]
\newtheorem{proposition}{Proposition}[section]
\newtheorem{theorem}{Theorem}[section]
\newtheorem{corollary}{Corollary}[section]
\begin{document}
	\begin{abstract}
		We compute Dirichlet eigenvalues and eigenfunctions explicitly for spherical lunes and the spherical triangles which are half the lunes, and show that the fundamental gap goes to infinity when the angle of the lune goes to zero. Then we show the spherical equilateral triangle of diameter $\frac{\pi}{2}$ is a strict local minimizer of the fundamental gap on the 
		space of the spherical triangles	with diameter  $\frac{\pi}{2}$, which partially extends Lu-Rowlett's result \cite{lu-rowlett} from the plane to the sphere.
	\end{abstract}
\maketitle

\section{Introduction}
	Given a bounded smooth domain $\Omega \subset M^n$ of a Riemannian manifold, the eigenvalue equation of the Laplacian on $\Omega$ with Dirichlet boundary condition  is
\begin{equation} \label{eve}
\Delta \phi = -\lambda\, \phi, \ \  \phi|_{\partial \Omega} =0. 
\end{equation}
 The eigenvalues consist of an infinite sequence going off to infinity. Indeed, the eigenvalues satisfy
$$   0 < \lambda_1 <  \lambda_2  \le \lambda_3  \cdots \to \infty. $$ In quantum physics the eigenvalues are possible allowed energy values and the eigenvectors are the quantum states which correspond to those energy levels.

	The fundamental (or mass) gap refers to the difference between the first two eigenvalues
\begin{equation} \label{fg} \Gamma (\Omega) = 
	\lambda_2 - \lambda_1 >0
\end{equation}
 of the Laplacian or more generally for Schr\"{o}dinger operators. It is a very interesting quantity both in mathematics and physics, and has been an active area of research recently. 

 In 2011, Andrews and Clutterbuck \cite{fundamental} proved the fundamental gap conjecture: for convex domains $\Omega \subset \mathbb R^n$ with diameter $D$, 
 $$\Gamma(\Omega)  \ge 3\pi^2/D^2.$$  
The result is sharp, with the limiting case being rectangles that collapse to a segment.    We refer to their paper for the history and earlier works on this important subject, see also the survey article \cite{daifundamental}. 

Recently, Dai, He, Seto, Wang, and Wei (in various subsets) \cite{seto2019sharp,dai2018fundamental,he2017fundamental} generalized  the  estimate to convex domains in $\mathbb{S}^n$, showing that the same bound holds:  $\lambda_2 - \lambda_1 \ge 3\pi^2/D^2$. Very recently, the second author with coauthors \cite{BCNSWW} showed the surprising result that there is no lower bound on the fundamental gap of convex domain in the hyperbolic space with arbitrary fixed diameter. This is done by estimating the fundamental gap of some suitable convex thin strips. 

For specific convex domains, one expects that the lower bound is larger. 
For triangles in $\mathbb R^2$ with diameter $D$, Lu-Rowlett \cite{lu-rowlett} showed that the fundamental gap is $\ge \frac{64 \pi^2}{9D^2}$ and equality holds if and only if it is an equilateral triangle. Note that explicit computation of the eigenvalues in general is very hard. For triangles the eigenvalues of only three types (the equilateral triangle and the two special right triangles) can be computed explicitly.   

In this paper we study some corresponding questions on the sphere.
First we compute the eigenvalues and eigenfunctions of the spherical lune  $L_\beta$ with angle $\beta$ which is the area bounded between two geodesics, see Figure~\ref{fig:lune}.  The statement about the eigenvalues is given below, while the eigenfunctions are included in the proof in Section 2. 
\begin{proposition}\label{2.1}
	The eigenvalues of Dirichlet Laplacian of the spherical lunes  $L_\beta$, 
	 without counting multiplicities, are given by the set
	$$
	\left\{\left(\frac{k\pi}{\beta}+j\right)\left(\frac{k\pi}{\beta}+j+1\right): k\in \NN^{+}, j\in \NN\right\}.
	$$	
	In particular, the first eigenvalue is $\frac{\pi}{\beta}(\frac{\pi}{\beta}+1)$, the fundamental gap is given by 
	$$
	3 \left(\frac{\pi}{\beta}\right)^{2}+\frac{\pi}{\beta}, \text{ if }\beta>\pi; \ \ \ 2\frac{\pi}{\beta}+2, \text { if } \beta\leq \pi. 
	$$
\end{proposition}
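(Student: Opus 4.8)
The plan is to diagonalize the Dirichlet Laplacian on $L_\beta$ by separation of variables in the coordinates adapted to the two vertices. Write the round metric on $\mathbb{S}^2$ as $d\phi^2+\sin^2\phi\,d\theta^2$, where $\phi\in(0,\pi)$ is the colatitude and $\theta$ the longitude, so that $L_\beta=\{(\phi,\theta):0<\phi<\pi,\ 0<\theta<\beta\}$, the two bounding geodesics are $\{\theta=0\}$ and $\{\theta=\beta\}$, and the two vertices are the poles $\phi=0,\pi$; here $\Delta=\partial_\phi^2+\cot\phi\,\partial_\phi+\sin^{-2}\phi\,\partial_\theta^2$. Since any eigenfunction $\psi\in H_0^1(L_\beta)$ has vanishing trace on $\{\theta=0\}\cup\{\theta=\beta\}$, expand $\psi(\phi,\theta)=\sum_{k\ge1}c_k(\phi)\sin(k\pi\theta/\beta)$; substituting into $\Delta\psi=-\lambda\psi$ and using orthogonality of the sines shows that each nonzero $c_k$ solves the associated Legendre equation
\begin{equation}
c_k''+\cot\phi\,c_k'-\frac{\mu_k^2}{\sin^2\phi}\,c_k+\lambda\,c_k=0,\qquad \mu_k:=\frac{k\pi}{\beta},
\end{equation}
on $(0,\pi)$, and that finiteness of the Dirichlet energy of $\psi$ forces $\int_0^\pi |c_k|^2/\sin\phi\,d\phi<\infty$ near the endpoints.

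Next I would treat this as a singular Sturm--Liouville problem on $(0,\pi)$ with weight $\sin\phi$. At each endpoint the indicial exponents of the equation are $\pm\mu_k$, and the energy bound just noted rules out the solution growing like $(\sin\phi)^{-\mu_k}$; thus $c_k$ must be, up to scaling, the solution regular (behaving like $(\sin\phi)^{\mu_k}$) at both $\phi=0$ and $\phi=\pi$. It is classical that the associated Legendre equation with parameter $\mu_k>0$ admits such a two-sided-regular solution exactly when $\lambda=(\mu_k+j)(\mu_k+j+1)$ for some $j\in\NN$, in which case the solution is $(\sin\phi)^{\mu_k}$ times a polynomial of degree $j$ in $\cos\phi$. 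This already shows the eigenvalue set is contained in $\{(\tfrac{k\pi}{\beta}+j)(\tfrac{k\pi}{\beta}+j+1):k\in\NN^{+},\,j\in\NN\}$, and each such number is realized by the explicit product eigenfunction. To see there are no other eigenvalues I would invoke completeness: for each fixed $k$ the regular solutions $\{c_{k,j}\}_{j\ge0}$ form an orthonormal basis of $L^2((0,\pi),\sin\phi\,d\phi)$, while $\{\sin(k\pi\theta/\beta)\}_{k\ge1}$ is a basis of $L^2((0,\beta))$, so the products span $L^2(L_\beta)$.

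It then remains to order the eigenvalues. Since $t\mapsto t(t+1)$ is strictly increasing on $[0,\infty)$, ordering amounts to ordering the numbers $\tfrac{k\pi}{\beta}+j$. The smallest of these is $\tfrac{\pi}{\beta}$, attained only at $k=1,j=0$, so $\lambda_1=\tfrac{\pi}{\beta}(\tfrac{\pi}{\beta}+1)$ and it is simple. The second smallest is $\min\{\tfrac{\pi}{\beta}+1,\ \tfrac{2\pi}{\beta}\}$, coming from $(k,j)=(1,1)$ and $(2,0)$ respectively, and $\tfrac{\pi}{\beta}+1\le\tfrac{2\pi}{\beta}$ precisely when $\beta\le\pi$. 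Computing $\lambda_2-\lambda_1$ in each case gives $(\tfrac{\pi}{\beta}+1)(\tfrac{\pi}{\beta}+2)-\tfrac{\pi}{\beta}(\tfrac{\pi}{\beta}+1)=2\tfrac{\pi}{\beta}+2$ for $\beta\le\pi$, and $\tfrac{2\pi}{\beta}(\tfrac{2\pi}{\beta}+1)-\tfrac{\pi}{\beta}(\tfrac{\pi}{\beta}+1)=3(\tfrac{\pi}{\beta})^2+\tfrac{\pi}{\beta}$ for $\beta>\pi$, as claimed.

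I expect the main obstacle to be the endpoint analysis at the conical vertices: pinning down the correct (Friedrichs) self-adjoint realization, i.e.\ checking that finite Dirichlet energy genuinely forces the regular solution for every $\mu_k>0$ — including the range $0<\mu_k<1$ that occurs when $\beta>\pi$, where both solutions lie in $L^2$ but only one has finite energy — and then invoking the classical characterization of when the associated Legendre equation has a solution regular at both poles. Once this is in place, the completeness statement and the elementary ordering of $\{\tfrac{k\pi}{\beta}+j\}$ with the split at $\beta=\pi$ are routine.
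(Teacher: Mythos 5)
Your proposal is correct, and while the skeleton (separation of variables in $\theta$, reduction to the associated Legendre equation \eqref{e1} for the radial factor, then ordering the numbers $\frac{k\pi}{\beta}+j$ to read off $\lambda_1$ and the gap with the split at $\beta=\pi$) coincides with the paper's, the engine you use to determine the admissible $\lambda$ is genuinely different. The paper identifies the two-sided-regular solutions by a case analysis on $\mu$ and $\ell$ using DLMF asymptotics of $P_\ell^{\mu}$, $Q_\ell^{\mu}$ near $x=1$ and the connection formula \eqref{e:14.9.7}, tracking poles of the Gamma factors (with separate treatment of half-integer and integer $\mu$). You instead do a Frobenius/quadratic-form analysis: the indicial exponents at each pole are $\pm\mu_k$, finiteness of the Dirichlet energy (equivalently membership in the $H^1_0$ form domain) kills the $(\sin\phi)^{-\mu_k}$ branch for every $\mu_k>0$ (including the resonant case $2\mu_k\in\NN$, where the second Frobenius solution still carries a nonzero $\phi^{-\mu_k}$ leading term), and the substitution $R=(\sin\phi)^{\mu_k}w(\cos\phi)$ turns \eqref{e1} into the Gegenbauer equation, whose two-sided-regular solutions are exactly the polynomials of degree $j$ with $\lambda=(\mu_k+j)(\mu_k+j+1)$; completeness of $\{(1-x^2)^{\mu_k/2}C_j^{(\mu_k+1/2)}\}_j$ in $L^2(dx)$ (density of polynomials in the weighted space) then rules out any further eigenvalues. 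What your route buys is a cleaner justification of the boundary condition at the conical vertices: in the range $0<\mu_k<1$ (which occurs for $k=1$, $\beta>\pi$) the radial problem is in the limit-circle case, both solutions are in $L^2(\sin\phi\,d\phi)$, and it is precisely the form-domain (Friedrichs) criterion that selects the regular branch — the paper simply imposes $R(\pm1)=0$ without discussing why that is the correct self-adjoint realization. What the paper's route buys is explicit closed forms for the eigenfunctions as $P_{\frac{k\pi}{\beta}+j}^{-\frac{k\pi}{\beta}}(\cos r)$, which it needs later (e.g.\ \eqref{eq-u_1}, \eqref{eq-u_2}); your Gegenbauer expressions are of course the same functions up to normalization. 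Your ordering argument and the resulting gap formulas check out in both regimes.
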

This is proven through separation of variables and analyzing the solutions to associated Legendre equations \eqref{e1}. Furthermore,  we  derive the eigenvalues  and eigenfunctions of the isosceles triangle which is half of the lune, whose eigenvalues are a subset of the ones of the lune.
\begin{proposition}  \label{prop2.2}
	For a spherical triangle with angles $\beta, \pi/2$ and $\pi/2$, its eigenvalues are given by 
	$$
	\left\{
	\left(\frac{k\pi}{\beta}+2j+1\right)\left(\frac{k\pi}{\beta}+2j+2\right): k\in \NN^{+}, j\in \NN
	\right\}.
	$$

	In particular, the first eigenvalue is $(\frac{\pi}{\beta}+1)(\frac{\pi}{\beta}+2)$, the fundamental gap is given by 
	\begin{equation}
	3 \left(\frac{\pi}{\beta}\right)^{2}+ 3\frac{\pi}{\beta}, \text{ if }\beta>\frac{\pi}{2}; \ \ \ 4\frac{\pi}{\beta}+10, \text { if } \beta\leq \frac{\pi}{2}.  \label{beta-gap}
	\end{equation}
\end{proposition}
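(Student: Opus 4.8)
\emph{Proof proposal.} The plan is to exploit the fact, already implicit in the phrase ``half of the lune'', that the triangle $T_\beta$ with angles $\beta,\pi/2,\pi/2$ is obtained from the lune $L_\beta$ by cutting along the equatorial great-circle arc that bisects it: in the coordinates $(\theta,\phi)\in[0,\pi]\times[0,\beta]$ used in the proof of Proposition~\ref{2.1}, the lune is the full rectangle and the triangle is the sub-rectangle $\{(\theta,\phi):0\le\theta\le\pi/2,\ 0\le\phi\le\beta\}$, whose new edge $\{\theta=\pi/2\}$ is a geodesic meeting the two sides $\{\phi=0\}$ and $\{\phi=\beta\}$ at right angles. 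The isometric reflection $R:\theta\mapsto\pi-\theta$ of $L_\beta$ fixes this edge and commutes with the Dirichlet Laplacian, so every Dirichlet eigenspace of $L_\beta$ splits into the $\pm1$-eigenspaces of $R$. A function on $L_\beta$ that is $R$-odd vanishes on $\{\theta=\pi/2\}$, hence restricts to a Dirichlet eigenfunction on $T_\beta$ with the same eigenvalue; conversely every Dirichlet eigenfunction of $T_\beta$ extends by odd reflection across $\{\theta=\pi/2\}$ to an $R$-odd Dirichlet eigenfunction of $L_\beta$. Therefore the Dirichlet spectrum of $T_\beta$ is exactly the ``$R$-odd part'' of the Dirichlet spectrum of $L_\beta$.

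Next I would read off which of the explicit lune eigenfunctions are $R$-odd. From the proof of Proposition~\ref{2.1} they are (up to completeness, which that proof establishes) the separated solutions $\Theta_{k,j}(\theta)\sin\!\big(\tfrac{k\pi}{\beta}\phi\big)$, $k\in\NN^+$, $j\in\NN$, with eigenvalue $\ell(\ell+1)$ where $\ell=\tfrac{k\pi}{\beta}+j$, and $\Theta_{k,j}$ is the solution of the associated Legendre equation \eqref{e1} with parameter $m=\tfrac{k\pi}{\beta}$ and degree $\ell$ that is regular at $\theta=0$. The $\sin$-factor is invariant under $R$, so the parity of the eigenfunction under $R$ equals the parity of $\Theta_{k,j}$ under $\cos\theta\mapsto-\cos\theta$. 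Writing the regular solution as $(\sin\theta)^{m}$ times a degree-$j$ polynomial in $\cos\theta$ (a Gegenbauer-type polynomial, which is what the Frobenius analysis of \eqref{e1} produces whether or not $m$ is an integer), one sees that $\Theta_{k,j}$ has parity $(-1)^j$; equivalently, imposing the Dirichlet condition $\Theta_{k,j}(\pi/2)=0$ forces that degree-$j$ polynomial to vanish at $\cos\theta=0$, which happens precisely when $j$ is odd. Setting $j=2j'+1$ then gives eigenvalue $\big(\tfrac{k\pi}{\beta}+2j'+1\big)\big(\tfrac{k\pi}{\beta}+2j'+2\big)$, which (renaming $j'$ to $j$) is the asserted set.

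Finally I would extract $\lambda_1,\lambda_2$ and the gap. Put $t=\pi/\beta$; since $\ell\mapsto\ell(\ell+1)$ is strictly increasing for $\ell>0$, the eigenvalues are ordered by $\ell=kt+2j+1$. The minimum is attained at $(k,j)=(1,0)$, giving $\ell_1=t+1$ and $\lambda_1=(t+1)(t+2)$. The second smallest value of $kt+2j+1$ is $\min\{2t+1,\,t+3\}$, attained at $(k,j)=(2,0)$ or $(1,1)$ (increasing $k$ by one adds $t$, increasing $j$ by one adds $2$, so these are the only two candidates), and $2t+1\le t+3\iff t\le2\iff\beta\ge\pi/2$. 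Hence for $\beta\ge\pi/2$ one gets $\lambda_2=(2t+1)(2t+2)$ and $\Gamma=\lambda_2-\lambda_1=3t^2+3t=3(\pi/\beta)^2+3\pi/\beta$, while for $\beta\le\pi/2$ one gets $\lambda_2=(t+3)(t+4)$ and $\Gamma=4t+10=4\pi/\beta+10$, the two expressions agreeing at $\beta=\pi/2$; this is \eqref{beta-gap}. The only genuinely delicate point is the parity/vanishing statement for $\Theta_{k,j}$ at $\theta=\pi/2$ when $\tfrac{k\pi}{\beta}\notin\NN$, and this is covered by the explicit polynomial form of the regular solution of \eqref{e1} already used for Proposition~\ref{2.1}, so no new estimate is required.
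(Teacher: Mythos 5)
Your proposal is correct and follows essentially the same route as the paper: reduce to the lune via odd reflection across the equator, so that the triangle's spectrum is the subset of Proposition~\ref{2.1}'s spectrum whose radial factor vanishes at $x=\cos r=0$, and then show this forces $\ell=|\mu|+j$ with $j$ odd. The only (harmless) difference is how that vanishing is detected --- you use the parity $(-1)^j$ of the Gegenbauer-polynomial factor of the regular solution, while the paper invokes the closed form for $P_\ell^\mu(0)$ from the DLMF and reads off when the Gamma functions in the denominator blow up; your explicit minimization yielding $\lambda_1$, $\lambda_2$ and \eqref{beta-gap} matches the statement.
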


On the plane, all equilateral triangles are related by scaling. On the other hand two equilateral triangles on the sphere are not conformal to each other. We concentrate on the $90^\circ$ equilateral triangle as for this one the eigenvalues and eigenfunctions can be computed explicitly. 
Analogously to \cite{lu-rowlett}, we  show that a spherical equilateral triangle of diameter $\frac{\pi}{2}$ is a local minimizer of the fundamental gap. 
\begin{theorem} \label{gap-tri}
The equilateral spherical triangle with angle $\frac{\pi}{2}$ is a strict local minimum for the gap  on the 
 space of the spherical triangles	with diameter  $\frac{\pi}{2}$.  Moreover \[
 \lambda_2 (T(t)) - \lambda_1(T(t)) \ge 18 +  \frac{16}{\pi}t + O(t^2),   \]
 where $T(t)$ is the triangle with vertices $(0,0)$, $(\frac{\pi}{2},0)$ and $(\frac{\pi}{2}-bt,\frac{\pi}{2}-at)$ with $a^2+b^2=1, a \ge 0, \ b\ge 0$ under geodesic polar coordinates centered at the north pole.
\end{theorem}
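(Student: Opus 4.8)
The plan is to treat this as a perturbation problem: the equilateral $90^\circ$ triangle $T(0)$ is the special right triangle $\beta = \pi/2$ of Proposition~\ref{prop2.2}, for which we know the spectrum explicitly. From that proposition, $\lambda_1(T(0)) = 3\cdot 4 = 12$ and the gap is $4\cdot 2 + 10 = 18$; moreover the first eigenvalue has multiplicity one (coming from $k=1, j=0$) and, crucially, one must check the multiplicity of $\lambda_2$. As in Lu--Rowlett \cite{lu-rowlett}, the key structural fact driving the local-minimum statement is that $\lambda_2(T(0))$ is \emph{degenerate} (multiplicity two), so that first-order perturbation theory produces a matrix whose two eigenvalues split, one going up and one going down, and one must combine this with the first-order motion of $\lambda_1$ to see that the gap $\lambda_2 - \lambda_1$ still increases to first order along every admissible direction except tangentially, where one needs the second-order term. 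So first I would pin down the eigenfunctions of $T(0)$ associated to $\lambda_1$ and $\lambda_2$ explicitly from the computations behind Proposition~\ref{prop2.2} (restrictions of lune eigenfunctions built from associated Legendre functions and $\sin/\cos$ of multiples of the angular variable).

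Next I would set up the deformation $T(t)$ concretely. The family fixes two vertices at $(0,0)$ and $(\pi/2,0)$ and moves the third vertex along a unit-speed path; the constraint is that $T(t)$ remain a spherical triangle of diameter exactly $\pi/2$, which is what forces the one-parameter (really, a cone of directions parametrized by $a,b \ge 0$, $a^2+b^2=1$) family rather than a two-parameter family. I would then pull back the eigenvalue problem on $T(t)$ to the fixed domain $T(0)$ via a $t$-dependent diffeomorphism $\Phi_t$, turning \eqref{eve} into an eigenvalue problem for a $t$-dependent elliptic operator $\mathcal{L}_t = \Delta_{g(t)}$ (plus a zeroth-order term if one normalizes the volume form) with $\mathcal{L}_0 = \Delta$, acting on a fixed function space with fixed (Dirichlet) boundary conditions. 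Standard analytic perturbation theory (Kato) then gives that the eigenvalue branches are analytic in $t$, and the first and second derivatives are computed by the usual Hadamard-type formulas in terms of $\dot{\mathcal{L}}_0$, $\ddot{\mathcal{L}}_0$ and the eigenfunctions of $T(0)$; because $\lambda_2$ is degenerate, the first-order data is the $2\times 2$ Gram matrix $\bigl(\langle \dot{\mathcal{L}}_0 \phi_i, \phi_j\rangle\bigr)$ on the two-dimensional $\lambda_2$-eigenspace.

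The computational heart of the argument is then: (i) compute $\dot{\mathcal{L}}_0$ from the first variation of the spherical metric under $\Phi_t$ (this is where the vector $(a,b)$ enters linearly), (ii) evaluate the relevant integrals of the explicit Legendre/trigonometric eigenfunctions against $\dot{\mathcal{L}}_0$ to get $\dot\lambda_1(0)$ and the two eigenvalues of the $2\times 2$ matrix, hence $\dot\lambda_2(0)$ as the smaller-splitting branch — I expect the outcome to be that $\lambda_2 - \lambda_1$ has first derivative $\tfrac{16}{\pi}$ independent of the admissible direction (or strictly positive for all of them), matching the stated expansion, and (iii) in any direction where the first-order term vanishes or where one wants strictness of the local minimum, push to second order using $\ddot{\mathcal{L}}_0$ together with the first-order eigenfunction corrections (solving the relevant Poisson equations on $T(0)$, again tractable because the spectrum is explicit). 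The main obstacle I anticipate is bookkeeping in step (ii)–(iii): choosing the diffeomorphism $\Phi_t$ so that the variational formulas stay manageable, correctly diagonalizing the $2\times 2$ first-order matrix for the degenerate $\lambda_2$, and then carrying out the second-order computation — the integrals involve products of associated Legendre functions $P_\nu^\mu$ with non-integer parameters, and getting clean closed forms (or at least clean enough inequalities) for those is the delicate part; this is exactly the place where the sphere differs from Lu--Rowlett's planar case, since there the analogous eigenfunctions are elementary trigonometric functions.
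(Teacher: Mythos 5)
Your proposal follows essentially the same route as the paper: pull the eigenvalue problem on $T(t)$ back to the fixed triangle via a $t$-dependent diffeomorphism, expand the Laplacian as $\Delta_S + tL_1 + O(t^2)$, compute the first variation of the simple $\lambda_1$ and of the doubly degenerate $\lambda_2$ against the explicit eigenfunctions, and minimize over the two-dimensional $\lambda_2$-eigenspace and over the admissible directions $(a,b)$. The differences are cosmetic: the paper handles the degenerate $\lambda_2$ with a test-function argument (using $f_2+\varepsilon f_1$) rather than Kato's $2\times 2$ first-order matrix, and since $\beta=\pi/2$ all the eigenfunctions are elementary (polynomials in $\cos r$, $\sin r$ times $\sin(2k\theta)$, not genuinely non-integer Legendre functions), the directional minimum of the first-order gap derivative comes out to $\frac{16}{\pi}>0$, so the second-order contingency you mention is never needed.
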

To get the estimate we compute and estimate the first derivative of the first two eigenvalues at $t=0$ as in \cite{lu-rowlett}. For this we
construct a diffeomorphism $F_t$ which maps the triangle $T(0)$ to the triangle $T(t)$ to pull back the metric on $T(t)$ to the fixed triangle $T(0)$. Unlike in the plane case, the diffeomorphism $F_t$ here is nonlinear, which makes the computations quite involved. The proof is given in Section 3. To keep the idea clear we put a large part of the computation in the appendix.

We expect the results in this paper will be very useful for further study of the fundamental gap of convex domains of $\mathbb S^n$. 

{\bf Acknowledgments}: The first two authors would like to thank Zhiqin Lu and Ben Andrews for their interest in the work and helpful discussions. 
\section{Eigenvalues of spherical lunes and the equilateral triangle}
In this section we compute Dirichlet eigenvalues and eigenfunctions for the spherical lunes and a family of spherical triangles, proving Proposition~\ref{2.1} and \ref{prop2.2}. The computation is done through separation of variables and looking at solutions to associated Legendre equations. 

\subsection{Spherical lunes}
Consider a lune  of angle $\beta$ ($0<\beta<2\pi$) on a sphere, $L_\beta$ (see Figure ~\ref{fig:lune}),  which is the area between two meridians each connecting the north pole and south pole and forming an angle $\beta$. Take $(r,\theta)$ to be the geodesic polar coordinates centered at the north pole, then the spherical metric is given by
$$
g= dr^{2}+\sin^{2}r d\theta^{2}, \ 0\leq r\leq \pi,\  0\leq \theta\leq \beta.
$$

\begin{center}
\begin{figure}
\includegraphics[scale=0.8]{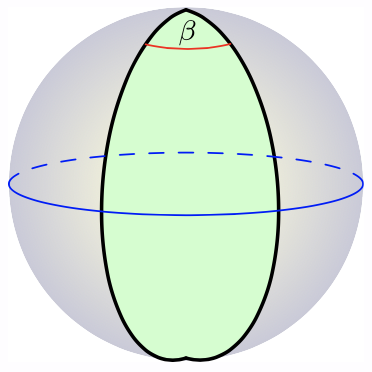}
\caption{A spherical lune of angle $\beta$}
\label{fig:lune}
\end{figure}
\end{center}
The Laplacian associated to this metric is given by
\begin{equation}
\Delta u(r,\theta)=\partial_{r}^{2}u + \frac{\cos r}{\sin r}\partial_{r}u +\frac{1}{\sin^{2} r} \partial_{\theta}^{2}u.  \label{lap}
\end{equation}
Hence the Dirichlet eigenvalue problem $\Delta u +\lambda u=0$ becomes
$$
\partial_{r}^{2}u + \frac{\cos r}{\sin r}\partial_{r}u +\frac{1}{\sin^{2} r} \partial_{\theta}^{2}u +\lambda u=0, \ u(r,0)=u(r,\beta)=0.
$$
Now we use separation of variables, and write $u(r,\theta)=R(r)\Theta(\theta)$. The boundary condition requires that 
$$\Theta(\theta)=\sin(\tfrac{k\pi}{\beta}\theta),\  k=1,2,\dots.$$
and correspondingly $R(r)$ satisfies 
\begin{equation} \label{eq-R}
R''(r)+\frac{\cos r}{\sin r}R'(r) - \frac{k^{2}\pi^{2}}{\beta^{2}\sin^{2} r}R(r) +\lambda R(r)=0.
\end{equation}

Taking $x=\cos (r)$, this becomes an equation for $R(x), -1\leq x\leq 1$:
\begin{equation}\label{e1}
(1-x^{2})R_{xx} - 2xR_{x}+[\lambda-\frac{k^{2}\pi^{2}}{\beta^{2}(1-x^{2})}]R=0.
\end{equation}
The equation is called \emph{general Legendre equation}~\cite[Ch.8]{AbramowitzStegun},
and there are two kinds of solutions to this equation, called the \emph{associated Legendre functions} $P_{\ell}^{\mu}$ (first kind) and $Q_{\ell}^{\mu}$ (second kind) of degree $\ell$ and order $\mu$, with 
\begin{equation}\label{e:initialellmu}
\lambda=\ell(\ell+1), \ \ell \in \RR; \ \mu=\pm \frac{k\pi}{\beta}.
\end{equation}
 
 With this set up, we are ready to prove Proposition \ref{2.1}. 
In general those functions have singularities at $-1$ and 1, so we need to find out those specific values of $\lambda$ such that for given $\mu$, the boundary conditions $R(-1)=R(1)=0$ are satisfied. 


\begin{proof}[Proof of Proposition~\ref{2.1}]
	To check the boundary condition we look at the asymptotics of $P_{*},\ Q_{*}$ at $1^{-}$,  and relations between $P_{*}(\pm x),\ Q_{*}(\pm x)$ to determine what combinations of $\ell$ and $\mu$ are admissible.

The asymptotic of $Q_{\ell}^{\mu}(x)$ as $x\rightarrow 1^{-}$~\cite[(14.8.3)-(14,8.6)]{1} and relations between $Q_{\ell}^{\pm\mu}(\pm x)$~\cite[(14.9.8)]{1} gives that the only possible case where $Q_{\ell}^{\mu}$ is regular will occur when $\mu=\frac{2N+1}{2}, N\in \ZZ$.
However in this case from~\cite[(14.2.6)]{1}, $Q_{\ell}^{\mu}$ and $P_{\ell}^{-\mu}$ are linearly dependent. 
Therefore we will only look at $P_{\ell}^{\mu}$ with $\ell,\mu$ given by~\eqref{e:initialellmu}.
In particular, from~\cite[(14.8.1)--(14.8.2)]{1} we have the two asymptotics
\begin{align}
&\lim_{x\rightarrow 1^{-}}P_{\ell}^{\mu}(x)=\frac{1}{\Gamma(1-\mu)}\left(\frac{2}{1-x}\right)^{\mu/2}, \ \mu\notin \NN^{+} \label{e:5}\\
&\lim_{x\rightarrow 1^{-}}P_{\ell}^{\mu}(x)=(-1)^{\mu}\frac{\Gamma(\ell+\mu+1)}{\Gamma(\ell-\mu+1)\mu!} \left(\frac{1-x}{2}\right)^{\mu/2}, \ \mu \in \NN^{+}, \ \ell\neq -\mu, -\mu+1, \dots, \mu-1 \label{e:6}
\end{align}
which restricts the possible choice to
\begin{enumerate}
	\item $\mu<0, \mu \notin \ZZ$;
	\item $\mu\in \ZZ$.
\end{enumerate}
We discuss those two cases separately. 

\noindent\textbf{Case 1: $\mu<0, \mu\notin \ZZ$.} The relation of $P_{*}(\pm x)$~\cite[(14.9.7)]{1} gives
\begin{equation}\label{e:14.9.7}
\frac{\sin(-\mu \pi)}{\Gamma(\ell+\mu+1)}P_{\ell}^{\mu}(-x)=-\frac{\sin\left((\ell+\mu)\pi\right)}{\Gamma(\ell-\mu+1)}P_{\ell}^{-\mu}(x) +\frac{\sin(\ell \pi)}{\Gamma(\ell+\mu+1)}P_{\ell}^{\mu}(x)
\end{equation}
or
\begin{equation}
\sin(-\mu \pi)P_{\ell}^{\mu}(-x)=-\frac{\Gamma(\ell+\mu+1)\sin\left((\ell+\mu)\pi\right)}{\Gamma(\ell-\mu+1)}P_{\ell}^{-\mu}(x) +\sin(\ell \pi)P_{\ell}^{\mu}(x)
\end{equation}

Take $x\rightarrow 1^{-}$ hence the left hand side $-x\rightarrow (-1)^{+}$, then the second term on the right side vanishes under the condition $\mu<0$. In order to have $\lim_{x\rightarrow 1^{-}}P_{\ell}^{\mu}(-x)=0$ we need the first term of the right hand side to vanish. Recall $P_{\ell}^{-\mu}(x\rightarrow 1^{-})=\infty$ from~\eqref{e:5}, so we need the coefficient of the first term to vanish. This gives two choices: 

(a) $\ell+\mu\in \ZZ, \ \frac{\Gamma(\ell+\mu+1)}{\Gamma(\ell-\mu+1)}\neq \infty$, 

 (b) $\Gamma(\ell+\mu+1)\neq \infty, \Gamma(\ell-\mu+1)=\infty$. 

Recall that $\Gamma(t)=\infty$ if and only if $t=0,-1,-2, \dots$. Case (a) and (b) above might overlap when $\mu=-\frac{1}{2}, -\frac{3}{2}, \dots$ and $\ell\pm \mu+1$ are integers. In this case we need  $\frac{\Gamma(\ell+\mu+1)}{\Gamma(\ell-\mu+1)}$ to be bounded. Since $\ell\pm\mu+1$  are both integers and $\ell-\mu+1>\ell+\mu+1$, we need $\ell-\mu+1\leq 0$ or $\ell+\mu+1> 0$. This leads to
$$
\begin{aligned}
\mu&=-j-\frac{1}{2}, \ j=0, 1,2,\dots, \\
\ell&=\mu-1,\mu-2, \dots \text{ or } -\mu, -\mu+1, -\mu+2, \dots.
\end{aligned}
$$

Now we assume $\mu$ is not a half-integer. For case (a), assuming $\ell+\mu\in \ZZ$, we only need to require $\Gamma(\ell+\mu+1)\neq \infty$ (since $\ell-\mu+1$ is not an integer, $\Gamma(\ell-\mu+1)$ is finite), hence 
$$\ell+\mu+1=1,2,3,\dots.$$ 
For case (b) we have 
$$\ell-\mu+1=0, -1,-2,\dots,$$ 
and it automatically satisfies $\Gamma(\ell+\mu+1)\neq \infty$. 

Combining the discussion above we get the admissible combination:
\begin{equation*}
\begin{aligned}
&\mu\notin \ZZ, \mu<0 \\
&\ell=\mu-1,\mu-2, \dots \text{ or } -\mu, -\mu+1, -\mu+2, \dots.
\end{aligned}
\end{equation*}
Notice that the above two choices of $\ell$ actually only give one eigenfunction since $P_{-\ell-1}^{\mu}(x)=P_{\ell}^{\mu}(x)$ (cf. ~\cite[(14.9.5)]{1}), therefore the admissible combination of $\ell$ and $\mu$ in this case can be written as
\begin{equation}
\begin{aligned}\label{e:ellmu2}
&\mu\notin \ZZ, \mu<0 \\
&\ell=|\mu|, |\mu|+1, |\mu|+2, \dots.
\end{aligned}
\end{equation}
Putting in $\mu=-\frac{k\pi}{\beta}$, the eigenvalues $\ell(\ell+1)$ are given by 
\begin{equation}\label{e:eigenvalue2}
\lambda=(\tfrac{k\pi}{\beta}+j)(\tfrac{k\pi}{\beta}+j+1), \ k\in \NN^{+}, \ j\in \NN, \ \frac{k\pi}{\beta}\notin \ZZ,
\end{equation}
and the corresponding eigenfunction is given by
$$
u=P_{\frac{k\pi}{\beta}+j}^{-\frac{k\pi}{\beta}} \left(\cos (r)\right) \sin (\tfrac{k\pi}{\beta}\theta).
$$

\noindent\textbf{Case 2: $\mu\in \ZZ$. } In this case we can similarly look at the asymptotic behavior of $P_{\ell}^{\mu}(x)$ as $x\rightarrow (-1)^{+}$ by using relation~\eqref{e:14.9.7}. 
$$
\frac{\sin(-\mu \pi)}{\Gamma(\ell+\mu+1)}P_{\ell}^{\mu}(-x)=-\frac{\sin(\ell+\mu)\pi)}{\Gamma(\ell-\mu+1)}P_{\ell}^{-\mu}(x) +\frac{\sin(\ell \pi)}{\Gamma(\ell+\mu+1)}P_{\ell}^{\mu}(x).
$$
If $\ell\notin \ZZ$, then either the first or the second term of the right hand side is a bounded factor times $P_{\ell}^{|\mu|}(x)$ which goes to $\infty$ as $x$ goes to $1^{-}$ by~\eqref{e:5}. Therefore $P_{\ell}^{\mu}(x)\rightarrow \infty$ as $x$ goes to $(-1)^{+}$, hence not a regular eigenfunction.

Therefore the only possibility left will be $\ell\in \ZZ, \mu\in \ZZ$. This leads to associated Legendre polynomials which can be checked explicitly. With the given boundary conditions, we get that $P_{\ell}^{\mu}(x)$ is a solution to the eigenvalue equation, if 
$$
\mu ,\ell\in \ZZ, \ 0< |\mu| \leq \ell.
$$
Note that $P_{\ell}^{\mu}(x)$ is a multiple of $P_{\ell}^{-\mu}(x)$ in this case, and to conform with the result from case 1, we get
\begin{equation}\label{e:ellmu1}
\begin{aligned}
&\mu=-1, -2, \dots,\\
& \ell=|\mu|, |\mu|+1, \dots.
\end{aligned}
\end{equation}
That is, the eigenvalues are given by 
\begin{equation}\label{e:eigenvalue1}
\lambda=(\tfrac{k\pi}{\beta}+j)(\tfrac{k\pi}{\beta}+j+1), \  k\in \NN^{+}, j\in \NN, \ \frac{k\pi}{\beta}\in \ZZ. 
\end{equation}
where the corresponding eigenfunction is given by
$$
u=P_{\frac{k\pi}{\beta}+j}^{-\frac{k\pi}{\beta}} \left(\cos (r)\right) \sin (\tfrac{k\pi}{\beta}\theta).
$$
As mentioned above, in this case the eigenfunction can also be written as
$$
u=P_{\frac{k\pi}{\beta}+j}^{\frac{k\pi}{\beta}} \left(\cos (r)\right) \sin (\tfrac{k\pi}{\beta}\theta).
$$

Combining~\eqref{e:eigenvalue2} and~\eqref{e:eigenvalue1} from the two cases, we get the result in the statement.
\end{proof} 

\subsection{Spherical triangles}

Now we consider the spherical triangle which is bounded by $\theta=0, \theta=\beta, r=\pi/2$ using the same coordinate as before, i.e. half of the spherical lunes discussed above. Note that when $\beta=\pi/2$ it is the equilateral triangle. With the same set up as in the spherical lune, the only change is that 
%
we are looking for
Legendre functions $P_{\ell}^{\mu}, Q_{\ell}^{\mu}$ with boundary conditions 
$$
P_{\ell}^{\mu}(1)=P_{\ell}^{\mu}(0)=0 \ \text{ or } \  Q_{\ell}^{\mu}(1)=Q_{\ell}^{\mu}(0)=0
$$
instead. 

\begin{proof}[Proof of Proposition~\ref{prop2.2}]
Since the eigenvalue equation~\eqref{e1} is invariant when changing $x$ to $-x$, an eigenfunction that satisfies $R(1)=R(0)=0$ can be extended to an odd eigenfunction $R$ on the whole interval $[-1,1]$ satisfying $R(1)=R(-1)=0$ with the same eigenvalue. Hence we only need to look at the eigenfunctions from the previous proposition, and find these ones satisfying an additional condition $R(0)=0$. 

We use the value of $P_{*}(x)$ at 0~\cite[(14.5.1)]{1}:
$$
P_{\ell}^{\mu}(0)=\frac{2^{\mu}\pi^{1/2}}{\Gamma(\frac{\ell-\mu}{2}+1)\Gamma(\frac{1}{2}-\frac{\ell+\mu}{2})}.
$$
From $P_{\ell}^{\mu}(0)=0$ we get either
\begin{equation}\label{e:01}
\frac{\ell-\mu}{2}+1=0,-1, -2, \dots,
\end{equation}
or
\begin{equation}\label{e:02}
\frac{1}{2}-\frac{\ell+\mu}{2}=0, -1, -2, \dots,
\end{equation}
should hold.




Combining with~\eqref{e:ellmu2} and~\eqref{e:ellmu1} we get 
\begin{align*}
& \mu<0,\\
& \ell=-\mu+1, -\mu+3, -\mu+5, \dots.
\end{align*}
The eigenvalues are contained in the following set
$$
\left\{
\left(\frac{k\pi}{\beta}+2j+1\right)\left(\frac{k\pi}{\beta}+2j+2\right): k\in \NN^{+}, j\in \NN
\right\}
$$
and the eigenfunctions are given by
$$
u=P_{\frac{k\pi}{\beta}+2j}^{-\frac{k\pi}{\beta}}\left(\cos(r)\right)\sin(\tfrac{k\pi}{\beta}\theta).
$$
We note here again when $\frac{k\pi}{\beta}\in \NN$ the eigenfunction can also be written as
$$
u=P_{\frac{k\pi}{\beta}+2j+1}^{\frac{k\pi}{\beta}}\left(\cos(r)\right)\sin(\tfrac{k\pi}{\beta}\theta).
$$
\end{proof}

For the equilateral triangle, we give the explicit form of the first two eigenvalues and eigenfunctions which will be used in the next section.
\begin{corollary}
For the equilateral triangle with $\beta=\frac{\pi}{2}$, the first eigenvalue is 12 and the corresponding eigenfunction with normalized $L^{2}$ norm is given by
\begin{equation}
u_{1}=\tilde C_{1}P_{3}^{2}(\cos(r))\sin (2\theta)=\sqrt{\frac{105}{2\pi}} \sin^{2}(r) \cos(r) \sin(2\theta).  \label{eq-u_1}
\end{equation}
The second eigenvalue is 30, and there are two corresponding normalized eigenfunctions given by
\begin{equation}
\begin{aligned}
&u_{2}^{(1)}=\tilde C_{2}P_{5}^{2}(\cos(r))\sin (2\theta)=\sqrt{\frac{1155}{8\pi}}(3\cos^5(r)-4\cos^3(r)+\cos(r))\sin (2\theta), \\
&u_{2}^{(2)}=\tilde C_{3}P_{5}^{4}(\cos(r))\sin (4\theta)=\sqrt{\frac{3465}{32\pi}}\cos(r)\sin^4(r)\sin(4\theta).  \label{eq-u_2}
\end{aligned}
\end{equation}
\end{corollary}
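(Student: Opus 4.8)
The plan is to specialize the results of Proposition~\ref{prop2.2} to the case $\beta = \frac{\pi}{2}$, so that $\frac{k\pi}{\beta} = 2k$ is always a positive even integer; this is precisely the regime $\beta \le \frac{\pi}{2}$ in \eqref{beta-gap}, where the fundamental gap formula gives $\lambda_2 - \lambda_1 = 4\cdot 2 + 10 = 18$, consistent with $\lambda_1 = (\frac{\pi}{\beta}+1)(\frac{\pi}{\beta}+2) = 3\cdot 4 = 12$ and $\lambda_2 = 30$. First I would extract from the proof of Proposition~\ref{prop2.2} that the eigenvalues lie in $\{(2k+2j+1)(2k+2j+2): k \in \NN^+, j \in \NN\}$, and that since $2k \in \NN$ the eigenfunctions admit the second representation $u = P^{2k}_{2k+2j+1}(\cos r)\sin(2k\theta)$. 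The smallest value of the product is attained at $k=1, j=0$, giving degree $\ell = 3$, order $\mu = 2$, eigenvalue $3\cdot 4 = 12$; the next value comes from either $(k,j) = (1,1)$ (degree $5$, order $2$, eigenvalue $5\cdot 6 = 30$) or $(k,j) = (2,0)$ (degree $5$, order $4$, eigenvalue $30$), both yielding $30$, which accounts for the stated multiplicity two.

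Next I would compute the associated Legendre functions explicitly. Using the Rodrigues-type formula $P^\mu_\ell(x) = (-1)^\mu (1-x^2)^{\mu/2}\frac{d^\mu}{dx^\mu}P_\ell(x)$ together with the Legendre polynomials $P_3(x) = \frac12(5x^3 - 3x)$ and $P_5(x) = \frac18(63x^5 - 70x^3 + 15x)$, one gets $P^2_3(x)$ proportional to $(1-x^2)\cdot x$, $P^2_5(x)$ proportional to $(1-x^2)(3x^3 - \ldots)$ — matching after simplification the quoted $3\cos^5 r - 4\cos^3 r + \cos r$ factor once $x = \cos r$ and $1-x^2 = \sin^2 r$ — and $P^4_5(x)$ proportional to $(1-x^2)^2 x$. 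Substituting $x = \cos r$ then reproduces the three functions $\sin^2 r\cos r\,\sin(2\theta)$, $(3\cos^5 r - 4\cos^3 r + \cos r)\sin(2\theta)$, and $\cos r\sin^4 r\,\sin(4\theta)$ up to constants.

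Finally I would fix the normalization constants by computing the $L^2$ norms over the triangle $T = \{0 \le r \le \frac\pi2,\ 0 \le \theta \le \frac\pi2\}$ with area element $\sin r\, dr\, d\theta$. The $\theta$-integrals $\int_0^{\pi/2}\sin^2(2\theta)\,d\theta = \frac\pi4$ and $\int_0^{\pi/2}\sin^2(4\theta)\,d\theta = \frac\pi4$ are immediate; the $r$-integrals are of the form $\int_0^{\pi/2}(\text{polynomial in }\cos r)\sin^{2m+1}r\,dr$, which reduce to Beta-function values via $u = \cos r$. Carrying these out and taking reciprocal square roots yields $\tilde C_1 = \sqrt{105/(2\pi)}/(\text{normalizing factor from }P^2_3)$, and similarly for $\tilde C_2, \tilde C_3$, producing the stated constants $\sqrt{105/(2\pi)}$, $\sqrt{1155/(8\pi)}$, $\sqrt{3465/(32\pi)}$. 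The only mild subtlety — not a real obstacle — is bookkeeping the constant from the Legendre definition against the polynomial actually written, and confirming that $30$ is genuinely the second distinct eigenvalue (i.e. no value strictly between $12$ and $30$ arises from any admissible $(k,j)$, which is clear since the product $(2k+2j+1)(2k+2j+2)$ is increasing in $2k+2j$ and takes the values $12, 30, 56, \ldots$ for $2k+2j = 2, 4, 6, \ldots$). So the main work is just the explicit evaluation of Legendre functions and Gaussian-type integrals.
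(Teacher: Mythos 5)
Your proposal is correct and follows essentially the same (implicit) route as the paper: the corollary is a direct specialization of Proposition~\ref{prop2.2} to $\beta=\frac{\pi}{2}$, with the minimal products $(2k+2j+1)(2k+2j+2)$ giving $12$ at $(k,j)=(1,0)$ and $30$ at $(1,1)$ and $(2,0)$, followed by explicit evaluation of $P_3^2$, $P_5^2$, $P_5^4$ and the normalization integrals (all of which check out, e.g.\ $\frac{105}{2\pi}\cdot\frac{\pi}{4}\cdot\frac{8}{105}=1$).
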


\section{Variation of Gap of  Spherical triangle with diameter $\frac{\pi}{2}$}
In this section we consider all spherical triangles with a fixed diameter $\frac{\pi}{2}$. It is not difficult to show that any such triangle can be moved on the sphere to have vertices  $(0,0)$, $(\frac{\pi}{2},0)$ and $(A,B)$ with $0<A<\frac{\pi}{2},\ 0<B<\frac{\pi}{2}$. 

Denote by $T$ the right triangle with vertices $(0,0)$, $(\frac{\pi}{2},0)$, $(\frac{\pi}{2},\frac{\pi}{2})$ and $T(t)$ the triangle with vertices $(0,0)$, $(\frac{\pi}{2},0)$ and $(\frac{\pi}{2}-bt,\frac{\pi}{2}-at)$ with $a^2+b^2=1, a \ge 0, \ b\ge 0$, see Figure~\ref{fig-T(t)}.

\begin{center}
	\begin{figure}[h]
	\includegraphics[scale=0.58]{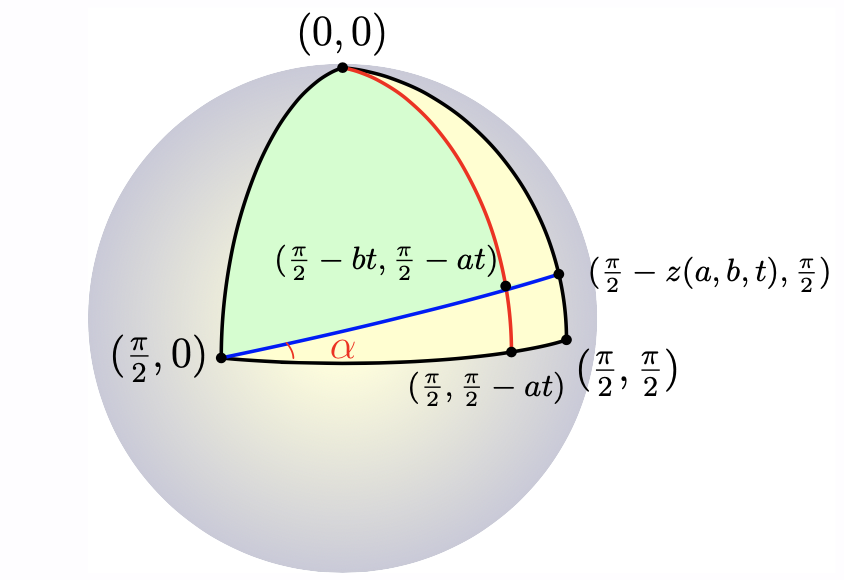}
	\caption{The deformed triangle $T(t)$ with three vertices at $(0,0), (\frac{\pi}{2}, 0), (\frac{\pi}{2}-bt, \frac{\pi}{2}-at)$.}
	\label{fig-T(t)}
	\end{figure}
\end{center}

We first construct a diffeomorphism $F_t$ which maps the triangle $T$  to $T(t)$.  To construct such a mapping, we first compute the function $l(\alpha,\theta)$ which gives the geodesic distance from the equator to the edge of the deformed triangle, see Figure~\ref{sphere-l}.
\begin{center}  
	\begin{figure}[!h]
\includegraphics[scale=0.6]{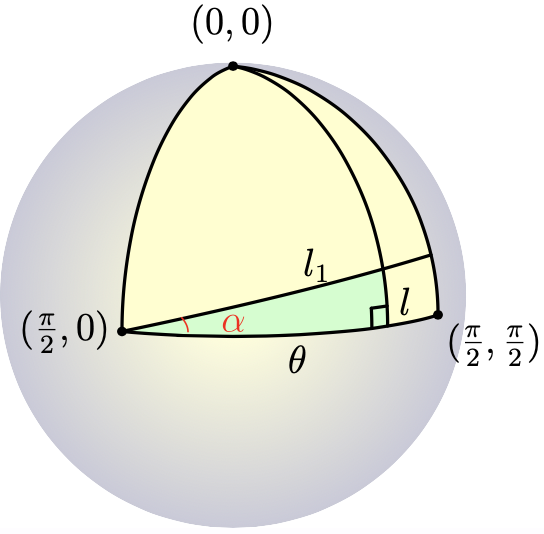}
 \caption{A spherical triangle with one side of length $\theta$ and two angles $\alpha, \pi/2$. The function $\ell(\alpha, \theta)$ computes the length of the side opposite to $\alpha$.}
 	\label{sphere-l}
 \end{figure}
\end{center}

For the spherical  triangle with length $\theta, l , l_1$, by the spherical  cosine law, 
\begin{align*}
\cos(l_1) = \cos(l)\cos(\theta), 
\end{align*}
and spherical law of sines
\begin{align*}
\sin(l_1) = \frac{\sin(l)}{\sin(\alpha)}
\end{align*}
we get 
\begin{align*}
\frac{\sin^2(l)}{\sin^2(\alpha)} + \cos^2(\theta)\cos^2(l) =1.
\end{align*}
Namely

\begin{align*}
(1-\sin^2(\alpha)\cos^2(\theta))\sin^2(l) = \sin^2(\alpha)\sin^2(\theta),
\end{align*}
so
\begin{align*}
\sin(l) = \pm\frac{\sin(\alpha)\sin(\theta)}{\sqrt{1-\sin^2(\alpha)\cos^2(\theta)}}.
\end{align*}
Since $\ell\leq \pi/2$, 
\begin{equation}\label{eleqn}
l(\alpha,\theta) = \arcsin\left( \frac{\sin(\alpha)\sin(\theta)}{\sqrt{1-\sin^2(\alpha)\cos^2(\theta)}}\right).
\end{equation}

Now let $z(a,b,t)$ be the distance between the vertex $(\frac{\pi}{2},\frac{\pi}{2})$ to the intersection of the edge of the deformed triangle and the $x=0$ plane. With the notation given in Figure~\ref{fig-T(t)}, we have  $z(a,b,t) = \alpha$. 

Since we have 
\begin{align*}
l\left(z(a,b,t),\tfrac{\pi}{2}-at\right) = bt,
\end{align*}
using \eqref{eleqn} this gives
\begin{align*}
\frac{\sin(\alpha)\sin(\theta)}{\sqrt{1-\sin^2(\alpha)\cos^2(\theta)}} = \sin(bt). 
\end{align*}
Solving  for $\sin \alpha$ gives
\begin{align*}
\sin(\alpha) = \frac{\sin(bt)}{\sqrt{\sin^2(\theta)+\sin^2(bt)\cos^2(\theta)}}.
\end{align*}
Hence
\begin{align*}
z(a,b,t) =\alpha&= \arcsin\left( \frac{\sin(bt)}{\sqrt{\sin^2(\frac{\pi}{2}-at)+\sin^2(bt)\cos^2(\frac{\pi}{2}-at)}} \right)\\
&=\arcsin\left( \frac{\sin(bt)}{\sqrt{\cos^2(at)+\sin^2(bt)\sin^2(at)}} \right).
\end{align*}

\subsection{Deformation map and the Laplacian}

We define the deformation map $F_t: T \to T(t)$ by 
\begin{align}
F_t(r,\theta) = \left(r-l\left(z(a,b,t),\theta-\frac{2a}{\pi}\theta t\right)\frac{2r}{\pi},\theta -\frac{2a}{\pi}\theta t\right) = (s,\psi).
\end{align}
With the computation above, we have
\begin{align*}
F_t(\tfrac{\pi}{2},\tfrac{\pi}{2}) = (\tfrac{\pi}{2}-bt,\frac{\pi}{2}-at).
\end{align*}
We also have
$$
F_{t}(0,0)=(0,0), \ F_{t}(\frac{\pi}{2}, 0)=(\frac{\pi}{2},0).
$$

We will need the following asymptotics. Since $
z(a,b,0) = 0, \ 
\frac{\partial}{\partial_{t}}z(a,b,0) = b
$, we have  $$z(a,b,t) = bt + O(t^2).$$
By \eqref{eleqn}, 
\begin{align*}
l(z,(1-A)\theta) = \arcsin\left( \frac{\sin(z)\sin((1-A)\theta)}{\sqrt{1-\sin^2(z)\cos^2((1-A)\theta)}}\right)
\end{align*}
where $A=\frac{2at}{\pi}$.  Then $
l|_{t=0} = 0, \  
\frac{\partial l}{\partial t}|_{t=0} =b\sin(\theta),
$
so \begin{equation}
l(z,(1-A)\theta) = b\sin(\theta) t + O(t^2).  \label{l-asymp}
\end{equation} Define
\begin{align*}
L:= \frac{\partial}{\partial\theta}[l(z,(1-A)\theta)].
\end{align*}
Then
\begin{align}
L = b\cos(\theta)t +O(t^2), \ \ \ 
\dd_\theta L = -b\sin(\theta)t + O(t^2).  \label{L-asymp}
\end{align}

To compute the variation of the Laplacian of the triangle $T(t)$, we fix the domain by pullback the round metric
\begin{align*}
g_{S} = dr^2 +\sin^2(r)d\theta
\end{align*}
on $T(t)$ with the diffeomorphism $F_t$ to $T$.  Note that when evaluating the pullback metric at $p\in T$, we evaluate the round metric at $F_{t}(p) \in T(t)$ so that 
\begin{align*}
g_t|_{p} = (F_{t}^*g_S)|_{p} = (dF_t|_p)^Tg_S|_{F_{t}(p)}dF_t|_p,
\end{align*}
where
\begin{align*}
g_S|_{F_t(p)} = \begin{pmatrix}
1 & 0 \\
0 & \sin^2(r(1-\frac{2l}{\pi}))
\end{pmatrix}
\end{align*}
and
\begin{align*}
dF_t =
\begin{pmatrix}
1-\frac{2}{\pi}l & -\frac{2r}{\pi}L \\
0 & (1-A)
\end{pmatrix}.
\end{align*}
Then
\begin{align*}
g_t = F_{t}^*g_S = \begin{pmatrix}
(1-\frac{2}{\pi}l)^2 & -\frac{2r}{\pi}(1-\frac{2}{\pi}l)L\\
-\frac{2r}{\pi}(1-\frac{2}{\pi}l)L & \frac{4r^2}{\pi^2}L^2 + (1-A)^2\sin^2(r(1-\frac{2l}{\pi}))
\end{pmatrix}
\end{align*}
and
\begin{align}\label{e:detgt}
\det(g_t) = (1-\tfrac{2}{\pi}l)^2(1-A)^2\sin^2(r(1-\tfrac{2l}{\pi})),
\end{align}
\begin{align*}
g_t^{-1}&=\begin{pmatrix}
\frac{\frac{4r^2}{\pi^2}L^2}{(1-\frac{2}{\pi}l)^2(1-A)^2}\csc^2(r(1-\frac{2l}{\pi})) + \frac{1}{(1-\frac{2}{\pi}l)^2} & \frac{\frac{2r}{\pi}L}{(1-\frac{2}{\pi}l)(1-A)^2}\csc^2(r(1-\frac{2l}{\pi})) \\
\frac{\frac{2r}{\pi}L}{(1-\frac{2}{\pi}l)(1-A)^2}\csc^2(r(1-\frac{2l}{\pi})) & \frac{1}{(1-A)^2}\csc^2(r(1-\frac{2l}{\pi}))
\end{pmatrix}.
\end{align*}
From this we can compute the Laplacian $\Delta_t$ of $g_t$ using the formula
\begin{align*}
\Delta f = \frac{1}{\sqrt{\deg(g)}}\dd_i[g^{ij}\sqrt{g}\dd_j f].
\end{align*}
We compute
\begin{align*}
\dd_r(g^{rr}\sqrt{g}\dd_r) 
&=\dd_r\left[\left(\frac{4r^2}{\pi^2}\frac{L^2}{(1-\frac{2}{\pi}l)(1-A)}\csc(r(1-\tfrac{2l}{\pi})) + \frac{(1-A)\sin(r(1-\frac{2l}{\pi}))}{(1-\frac{2}{\pi}l)}\right)\dd_r\right]\\
&=\left(\frac{4r^2}{\pi^2}\frac{L^2}{(1-\frac{2}{\pi}l)(1-A)}\csc(r(1-\tfrac{2l}{\pi})) + \frac{(1-A)\sin(r(1-\frac{2l}{\pi}))}{(1-\frac{2}{\pi}l)}\right)\dd_r^2\\
&\hspace{0.2 in} + \left((1-A)\cos(r(1-\tfrac{2l}{\pi}))-\frac{4r^2}{\pi^2}\frac{L^2}{(1-A)}\csc(r(1-\tfrac{2l}{\pi}))\cot(r(1-\tfrac{2l}{\pi}))\right.\\
&\hspace{0.2 in}\left. + \frac{8r}{\pi^2}\frac{L^2}{(1-\frac{2}{\pi}l)(1-A)}\csc(r(1-\tfrac{2l}{\pi}))  \right)\dd_r,
\end{align*}
and
\begin{align*}
\dd_r(g^{r\theta}\sqrt{\det g}\dd_\theta) &=\dd_r\left[\frac{\frac{2r}{\pi}L}{(1-A)}\csc(r(1-\tfrac{2l}{\pi}))\dd_\theta \right]\\
&=\frac{2r}{\pi}\frac{L}{1-A}\csc(r(1-\tfrac{2l}{\pi}))\dd_r\dd_\theta + \frac{2}{\pi}\frac{L}{(1-A)}\csc(r(1-\tfrac{2l}{\pi}))\dd_\theta\\
&\hspace{0.2in} - \frac{2r}{\pi}\frac{L(1-\frac{2l}{\pi})}{1-A}\csc(r(1-\tfrac{2l}{\pi}))\cot(r(1-\tfrac{2l}{\pi}))\dd_\theta,
\end{align*}
and
\begin{align*}
\dd_\theta[g^{\theta r}\sqrt{\deg g}\dd_r] &=\dd_\theta\left[\frac{\frac{2r}{\pi}L}{(1-A)}\csc(r(1-\tfrac{2l}{\pi}))\dd_r \right]\\
&=\frac{2r}{\pi}\frac{L}{(1-A)}\csc(r(1-\tfrac{2l}{\pi}))\dd_\theta\dd_r +\frac{2r}{\pi}\frac{\dd_\theta L}{1-A}\csc(r(1-\tfrac{2l}{\pi}))\dd_r \\
&\hspace{0.2 in} + \frac{4r^2L}{\pi^2}\frac{\dd_\theta L}{1-A}\csc(r(1-\tfrac{2l}{\pi}))\cot(r(1-\tfrac{2l}{\pi}))\dd_r,
\end{align*}
and
\begin{align*}
\dd_\theta[g^{\theta\theta}\sqrt{\det g}\dd_\theta] &=\dd_\theta\left[ \frac{(1-\frac{2}{\pi}l)}{(1-A)}\csc(r(1-\tfrac{2l}{\pi}))\dd_\theta \right]\\
&=-\frac{2}{\pi}\frac{L}{(1-A)}\csc(r(1-\tfrac{2l}{\pi}))\dd_\theta + \frac{(1-\frac{2}{\pi}l)}{(1-A)}\csc(r(1-\tfrac{2l}{\pi}))\dd_\theta^2 \\
&\hspace{0.2 in} + \frac{2rL}{\pi}\frac{(1-\frac{2}{\pi}l)}{(1-A)}\csc(r(1-\tfrac{2l}{\pi}))\cot(r(1-\tfrac{2l}{\pi}))\dd_\theta.
\end{align*}

Combining terms and using $l,L=O(t)$, we have
\begin{align*}
\Delta_t &= \frac{1}{(1-\frac{2}{\pi}l)^2}\dd_r^2 + \frac{1}{(1-\frac{2}{\pi}l)}\cot(r(1-\tfrac{2l}{\pi}))\dd_r+\frac{2r}{\pi}\frac{\dd_\theta L}{(1-\frac{2l}{\pi})(1-A)^2}\csc^2(r(1-\tfrac{2l}{\pi}))\dd_r \\
&\hspace{0.2 in} + \frac{4r}{\pi}\frac{L}{(1-\frac{2}{\pi}l)(1-A)^2}\csc^2(r(1-\tfrac{2l}{\pi}))\dd_r\dd_\theta + \frac{1}{(1-A)^2}\csc^2(r(1-\tfrac{2l}{\pi}))\dd_\theta^2 + O(t^2)
\end{align*}

Using the series expansions
\begin{align*}
\cot(r(1-\tfrac{2l}{\pi})) &= \cot(r) + \frac{2rl}{\pi}\csc^2(r) + O(t^2),\\
\csc^2(r(1-\tfrac{2l}{\pi})) &= \csc^2(r)+\frac{4rl}{\pi}\cot(r)\csc^2(r)+O(t^2),
\end{align*}
and
\begin{align*}
\frac{1}{(1-\frac{2}{\pi}l)^2} =  1+\frac{4}{\pi}l + O(t^2), \ \ \
\frac{1}{(1-A)^2} =  1+2A + O(t^2),
\end{align*}
and plugging in the first order term for $l$, $L$ and $\dd_\theta L$ from \eqref{l-asymp}, \eqref{L-asymp},  and $A=\frac{2at}{\pi}$, 
we obtain the following asymptotic formula.
\begin{lemma} The first order asymptotic expansion of the Laplacian of the deformed triangle $T(t)$ is given by
	\begin{align}
	\Delta_t &=\Delta_S+t L_1
+ O(t^2),
\end{align}
where $\Delta_S$ is the standard sphere Laplacian \eqref{lap} and
\begin{align}  \label{def-L_1}
L_1 &:= \frac{4}{\pi}b\sin(\theta)\dd_r^2 + \frac{2}{\pi}b\sin(\theta)\cot(r)\dd_r + \frac{4}{\pi}br\cos(\theta)\csc^2(r)\dd_r\dd_\theta \\
&\hspace{0.2 in}+\frac{4}{\pi}br\sin(\theta)\cot(r)\csc^2(r)\dd_\theta^2 + \frac{4}{\pi}a\csc^2(r)\dd_\theta^2. \nonumber
\end{align} 
\end{lemma}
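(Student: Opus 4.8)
The aim is to expand the Laplacian $\Delta_t$ of the pulled-back metric $g_t = F_t^* g_S$ to first order in $t$. The plan is a direct computation in three stages, relying throughout on the small-$t$ asymptotics already recorded: $z(a,b,t) = bt + O(t^2)$, $l = b\sin(\theta)\,t + O(t^2)$ by \eqref{l-asymp}, $L = \partial_\theta l = b\cos(\theta)\,t + O(t^2)$ together with $\partial_\theta L = -b\sin(\theta)\,t + O(t^2)$ by \eqref{L-asymp}, and $A = \tfrac{2a}{\pi}t$; in particular $l, L, \partial_\theta L, A$ are all $O(t)$, so any monomial in these of degree $\ge 2$ may be dropped as $O(t^2)$.

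\emph{Stage one:} differentiate $F_t(r,\theta) = (r - \tfrac{2r}{\pi}l(z,(1-A)\theta),\,(1-A)\theta)$ to obtain $dF_t$, then form $g_t = (dF_t)^T (g_S\circ F_t)\,dF_t$, whose determinant is $(1-\tfrac{2}{\pi}l)^2(1-A)^2\sin^2(r(1-\tfrac{2l}{\pi}))$ and whose inverse follows by $2\times 2$ matrix inversion. \emph{Stage two:} feed these into $\Delta_t f = \tfrac{1}{\sqrt{\det g_t}}\,\partial_i(g_t^{ij}\sqrt{\det g_t}\,\partial_j f)$, expand the four pieces $\partial_r(g^{rr}\sqrt{\det g}\,\partial_r)$, $\partial_r(g^{r\theta}\sqrt{\det g}\,\partial_\theta)$, $\partial_\theta(g^{\theta r}\sqrt{\det g}\,\partial_r)$, $\partial_\theta(g^{\theta\theta}\sqrt{\det g}\,\partial_\theta)$ by the product rule, and divide by $\sqrt{\det g_t}$; the terms carrying a factor $L^2$ are already $O(t^2)$ and drop out, leaving the five-term second-order operator displayed immediately before the Lemma.

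\emph{Stage three:} Taylor expand each remaining coefficient in $t$ via $\cot(r(1-\tfrac{2l}{\pi})) = \cot r + \tfrac{2rl}{\pi}\csc^2 r + O(t^2)$, $\csc^2(r(1-\tfrac{2l}{\pi})) = \csc^2 r + \tfrac{4rl}{\pi}\cot r\csc^2 r + O(t^2)$, $(1-\tfrac{2}{\pi}l)^{-2} = 1 + \tfrac{4}{\pi}l + O(t^2)$, $(1-A)^{-2} = 1 + 2A + O(t^2)$, and then substitute the leading forms of $l, L, \partial_\theta L, A$. The $t^0$ part reassembles to $\Delta_S = \partial_r^2 + \cot r\,\partial_r + \csc^2 r\,\partial_\theta^2$, and the coefficients of $t$ in front of $\partial_r^2$, $\partial_r$, $\partial_r\partial_\theta$ and $\partial_\theta^2$ give precisely $L_1$ as in \eqref{def-L_1}. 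The one point that needs care — and the main, purely bookkeeping, obstacle — is the coefficient of $\partial_r$: the term $\tfrac{2rl}{\pi}\csc^2 r$ produced by expanding $\cot$ is cancelled exactly by the term $\tfrac{2r}{\pi}(\partial_\theta L)\csc^2 r$ coming from $\partial_\theta(g^{\theta r}\sqrt{\det g}\,\partial_r)$, since $\partial_\theta L = \partial_\theta^2 l = -l + O(t^2)$ for $l = b\sin(\theta)t$, so that only $\tfrac{2}{\pi}b\sin(\theta)\cot r\,\partial_r$ survives; the first-order $\partial_\theta^2$ contributions reduce to $2A\csc^2 r + \tfrac{4rl}{\pi}\cot r\csc^2 r$. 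A cleaner but equivalent route is to differentiate $t \mapsto \Delta_{g_t}$ at $t=0$ using $\Delta_g f = g^{ij}\partial_i\partial_j f + \tfrac{1}{\sqrt{\det g}}\partial_i(\sqrt{\det g}\,g^{ij})\,\partial_j f$ together with $\tfrac{d}{dt}\big|_0 g_t^{ij} = -g^{ik}\dot g_{kl}g^{lj}$ and $\tfrac{d}{dt}\big|_0 \log\sqrt{\det g_t} = \tfrac12 g^{ij}\dot g_{ij}$; since $\dot g_0$ has a simple structure this keeps the algebra lighter and must reproduce the same $L_1$.
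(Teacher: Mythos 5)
Your proposal is correct and follows essentially the same route as the paper: pull back the round metric by $F_t$, expand the four divergence-form pieces of $\Delta_{g_t}$, drop the $O(t^2)$ terms (in particular those carrying $L^2$), and Taylor-expand the coefficients using the recorded asymptotics of $l$, $L$, $\partial_\theta L$, $A$. Your observation that the $\tfrac{2rl}{\pi}\csc^2 r$ term from expanding $\cot(r(1-\tfrac{2l}{\pi}))$ cancels against $\tfrac{2r}{\pi}(\partial_\theta L)\csc^2 r$ because $\partial_\theta L = -b\sin(\theta)t + O(t^2) = -l + O(t^2)$ is exactly the bookkeeping the paper performs implicitly when it "plugs in the first order terms," and it correctly yields the surviving coefficient $\tfrac{2}{\pi}b\sin(\theta)\cot r$ of $\partial_r$ in $L_1$.
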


\subsection{Perturbation of eigenvalues}
Let $u_1$ be the eigenfunction for $\lambda_1$ on the equilateral triangle with the round metric with unit norm (for explicit form see \eqref{eq-u_1}).  Let $f_1(t)$ and $\lambda_1(t)$ be the first eigenfunction and eigenvalue for $T(t)$.  
By the simplicity of $\lambda_1(t)$, it is differentiable.  Then
\begin{align*}
\lambda_1(t) &= \lambda_1+t\dot{\lambda_1}+O(t^2)\\
f_1(t) &= u_1 +t\dot{f_1} + O(t^2).
\end{align*}
Denote by $\langle, \rangle_{T}$ the inner product over the equilateral triangle $T$ with round metric, for small $t$ we have
\begin{align*}
\lambda_1(t)\langle f_1(t),f_1(t)\rangle_{T} &= -\langle \Delta_tf_1(t),f_1(t)\rangle_T\\
&= -\langle (\Delta_{S^2}+tL_1)[u_1+t\dot{f_1}],u_1+t\dot{f_1}\rangle_T  + O(t^2)\\
&= \lambda_1\|u_1\|^2 +2t\lambda_1\langle u_1,\dot{f_1}\rangle_{T} -t\langle L_1u_1,u_1\rangle_T + O(t^2).
\end{align*}
On the other hand
\begin{align*}
\lambda_1(t)\langle f_1(t),f_1(t)\rangle_T&= (\lambda_1+t\dot{\lambda_1})\langle u_1+t\dot{f_1},u_1+t\dot{f_1}\rangle_T  + O(t^2)\\
&=\lambda_1\|u_1\|^2+2t\lambda_1\langle u_1,\dot{f_1}\rangle_T+t\dot{\lambda_1}\|u_1\|^2  + O(t^2).
\end{align*}
Since $u_1$ have unit $L^2$ norm, we get
\begin{equation}\label{lambda1}
\dot{\lambda_1} = -\langle L_1u_1,u_1\rangle_T.
\end{equation}

Under the deformation, the relation between the integrals is 
\begin{align*}
\int_{T(t)} f \sin (r)drd\theta =\int_{T(t)=F_t(T)} f \sqrt{\det(g_S)} = \int_T F_t^{*}[f] \sqrt{\det(F^*_t g_S}) = (1-A)\int_T F_t^{*}[f] (1-\tfrac{2}{\pi}l)\sin(r(1-\tfrac{2l}{\pi}))
\end{align*}
where the second equality comes from~\eqref{e:detgt}.
Therefore, using \eqref{l-asymp} and definition of $A$, we have
\begin{align*}
\int_{T(t)}f_1f_2\sin(r)drd\theta &= (1-A)\int_T F_{t}^*[f_1f_2] (1-\tfrac{2}{\pi}l)\sin(r(1-\tfrac{2l}{\pi}))drd\theta\\
&=(1-A)\int_T F_{t}^*[f_1f_2] (1-\tfrac{2}{\pi}l)(\sin(r)-\tfrac{2rl}{\pi}\cos(r))drd\theta +O(t^2)\\
&= \int_T F_{t}^*[f_1f_2]\sin(r)drd\theta-A\int_TF_{t}^*[f_1f_2]\sin(r)drd\theta\\
&-\frac{2}{\pi} l \left(\int_TF_{t}^*[f_1f_2]\sin(r)drd\theta +\int_TF_{t}^*[f_1f_2]r\cos(r)drd\theta \right) +O(t^2)\\
&:= \int_T F_{t}^*[f_1f_2]\sin(r)drd\theta + tZ+O(t^2).
\end{align*}

If $f_1$ and $f_2$ are eigenfunctions for the first two Dirichlet eigenvalues on $T(t)$ (with round metric) then by orthogonality we have that
\begin{align}
\int_T F_{t}^*[f_1f_2]\sin(r)drd\theta =-tZ+O(t^2).  \label{int-asym}
\end{align}

\begin{lemma}  \label{gap-var}
Let $u_{1}, u_{2}$ be eigenfunctions for $T$ with unit $L^{2}$ norm corresponding to the first two eigenvalues $\lambda_{1}, \lambda_{2}$.
Suppose that for any $a,b\geq 0$,
with respect to the linear order operator $L_1$ defined in \eqref{def-L_1}, 
\begin{align*}
\int_T u_2L_1 u_2- u_1L_1u_1 < 0.
\end{align*}
Then the equilateral triangle $T$ is a strict local minimum for the gap function among all spherical triangles with diameter $\frac{\pi}{2}$.
\end{lemma}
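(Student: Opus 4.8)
The plan is to show that, along every admissible deformation, the one-sided derivative at $t=0$ of the gap $\Gamma(T(t))=\lambda_2(T(t))-\lambda_1(T(t))$ is strictly positive, and then to promote this to a genuine strict local minimum by a compactness argument in the direction parameter $(a,b)$. This reduction is legitimate because $T$ sits at the corner $(\frac{\pi}{2},\frac{\pi}{2})$ of the admissible region for the third vertex: a spherical triangle of diameter $\frac{\pi}{2}$ sufficiently close to $T$ is, after an isometry of the sphere, one with third vertex $(\frac{\pi}{2}-\varepsilon_1,\frac{\pi}{2}-\varepsilon_2)$ with $\varepsilon_1,\varepsilon_2\ge 0$ small, and putting $t=(\varepsilon_1^2+\varepsilon_2^2)^{1/2}$, $b=\varepsilon_1/t$, $a=\varepsilon_2/t$ exhibits it as $T(t)$ with $a,b\ge 0$, $a^2+b^2=1$. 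So it suffices to prove $\Gamma(T(t))>\Gamma(T)$ for all small $t>0$, uniformly in $(a,b)$.

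Since $\lambda_1(T)=12$ is simple, $\lambda_1(t)$ is differentiable at $0$ with $\dot{\lambda_1}=-\langle L_1u_1,u_1\rangle_T$ by \eqref{lambda1}. For the second eigenvalue, recall from Proposition~\ref{prop2.2} and the corollary above that $\lambda_2(T)=30$ has multiplicity two with eigenspace $E=\mathrm{span}\{u_2^{(1)},u_2^{(2)}\}$, and that $\lambda_1=12<\lambda_2=\lambda_3=30<\lambda_4$. Because $g_t=F_t^*g_S$, hence $\Delta_t=\Delta_S+tL_1+O(t^2)$, is real-analytic in $t$ near $0$, Rellich--Kato perturbation theory provides analytic eigen-branches; by the spectral separation, for small $t>0$ exactly two of them lie near $30$ and these are $\lambda_2(t)\le\lambda_3(t)$. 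Arguing exactly as in the derivation of \eqref{lambda1}, but on the eigenspace $E$ — differentiate $-\Delta_tf_t=\lambda(t)f_t$ along the branch realizing $\lambda_2(t)$, pair with an arbitrary $g\in E$, and use $-\Delta_Sg=30\,g$ together with self-adjointness of $\Delta_S$ to cancel the $\dot f$-terms — one obtains
\begin{align*}
-\langle L_1f_0,g\rangle_T=\dot\lambda\,\langle f_0,g\rangle_T,\qquad g\in E,
\end{align*}
where $f_0\in E$ is the limiting eigenfunction. Hence the admissible first-order slopes are the eigenvalues of the symmetric form $u\mapsto-\langle L_1u,u\rangle_T$ on $E$, and $\lambda_2(t)$ selects the smaller of the two, so the right derivative of $\lambda_2(T(t))$ at $0$ equals
\begin{align*}
\min_{u\in E,\ \|u\|_{L^2(T)}=1}\left(-\langle L_1u,u\rangle_T\right)=-\max_{u\in E,\ \|u\|_{L^2(T)}=1}\langle L_1u,u\rangle_T.
\end{align*}

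Combining the two computations, the right derivative of the gap at $0$ equals $\langle L_1u_1,u_1\rangle_T-\max_{u\in E,\ \|u\|=1}\langle L_1u,u\rangle_T$. Reading the hypothesis as $\langle L_1u_2,u_2\rangle_T<\langle L_1u_1,u_1\rangle_T$ for every unit $u_2\in E$ and every $a,b\ge 0$ with $a^2+b^2=1$, and using compactness of the unit sphere of $E$ together with continuity of $u\mapsto\langle L_1u,u\rangle_T$, we get $\max_{u\in E,\ \|u\|=1}\langle L_1u,u\rangle_T<\langle L_1u_1,u_1\rangle_T$, so the right derivative of $\Gamma(T(t))$ at $0$ is strictly positive. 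Finally, $L_1$, $E$, the eigenfunctions, and the $O(t^2)$ remainders depend continuously on $(a,b)$ over the compact arc $\{a,b\ge 0,\ a^2+b^2=1\}$, so there exist $\delta,c>0$ with $\Gamma(T(t))\ge\Gamma(T)+c\,t$ for $0<t<\delta$ uniformly in $(a,b)$; this is precisely the assertion that $T$ is a strict local minimum.

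The main obstacle is the second paragraph: treating the multiplicity-two eigenvalue $30$ rigorously, i.e.\ justifying that $\lambda_2(T(t))$ has a one-sided derivative at $0$ equal to the smallest eigenvalue of the first-order perturbation form of $-L_1$ on $E$. This needs enough regularity of the pulled-back family $g_t=F_t^*g_S$ in $t$ (real-analyticity, or at least that the two branches emanating from $30$ are $C^1$) for Rellich--Kato theory to apply, and the spectral gaps $\lambda_1<\lambda_2$ and $\lambda_2<\lambda_4$ so that these branches are genuinely the second and third eigenvalues of $T(t)$ for small $t$. Once this is in place, computing $\dot\Gamma|_{0^+}$ and running the compactness argument are routine; the explicit evaluation of $\langle L_1u_i,u_i\rangle_T$ against the Legendre eigenfunctions \eqref{eq-u_1}--\eqref{eq-u_2}, which is what verifies the hypothesis, is where the bulk of the calculation lies.
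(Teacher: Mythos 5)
Your argument is essentially correct but follows a genuinely different route from the paper. You compute the one-sided derivative of $\lambda_2(T(t))$ at $t=0$ via degenerate (Rellich--Kato) perturbation theory: the two analytic branches emanating from the double eigenvalue $30$ have slopes equal to the eigenvalues of the compression of $-L_1$ to the eigenspace $E$, and $\lambda_2(t)$ follows the smaller one, so $\frac{d}{dt}\big|_{0^+}\lambda_2 = \min_{u\in E,\,\|u\|=1}\bigl(-\langle L_1u,u\rangle_T\bigr)$; your verification that this compression is symmetric (diagonal in the orthonormal basis of limiting eigenfunctions) is the right way to justify the min-max over $E$. The paper instead never differentiates $\lambda_2(t)$: it takes the pulled-back eigenfunctions $f_1,f_2$ of $T(t)$, forms the corrected trial function $f_2+\eps f_1$ with $\eps=O(t)$ chosen so that it is $L^2$-orthogonal to $u_1$, and feeds it into the min-max characterization of $\lambda_2(T)$ to get the one-sided inequality $\lambda_2\le\lambda_2(t)+t\int_T u_2L_1u_2+O(t^2)$ for some unit $u_2\in E$; combined with the exact first-order formula \eqref{lambda1} for the simple eigenvalue $\lambda_1$, the hypothesis (quantified over all unit $u_2\in E$, as you correctly read it) gives strict growth of the gap. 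Your approach buys the exact first-order asymptotics of $\lambda_2(t)$ and makes the $\frac{16}{\pi}t$ coefficient in Theorem~\ref{gap-tri} transparent; the paper's approach buys elementarity, needing only simplicity of $\lambda_1$, the variational principle, and the $O(t)$ closeness of $f_2$ to $E$.

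The one place where your writeup is materially incomplete is the step you yourself flag: you must actually establish that $t\mapsto g_t=F_t^*g_S$ defines an analytic family of Dirichlet forms of type (B) on the fixed domain $T$ (which has corners), with a $t$-dependent $L^2$ inner product, so that Kato's theory delivers two analytic branches through $30$ with orthonormal limiting eigenvectors; the spectral gaps $12<30<\lambda_4$ then identify these branches as $\lambda_2(t),\lambda_3(t)$ for small $t$. This is standard but not free, and it is precisely the overhead the paper's test-function argument is designed to avoid. Your reduction of ``local minimum'' to positivity of the one-sided derivative uniformly over the compact arc $\{a,b\ge 0,\ a^2+b^2=1\}$ is needed in either approach and is handled correctly.
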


\begin{proof}
 Let $f_1$ and $f_2$ be eigenfunctions for the first two Dirichlet eigenvalues of the deformed triangle $T(t)$.  Here we will integrate over the equilateral triangle $T$ with the round metric.  Since $\Delta_tf_2 =  -\lambda(t)f_2$ is pointwise, it still satisfies the eigenvalue equation after pullback. And up to first order, $F_{t}^*[f_2] = f_2+O(t)$. By abuse of notation, $f_i = F_{t}^*[f_i]$.  Then define
\begin{align*}
\eps(t) := \frac{-\int_{T }u_1 f_2}{\int_{T} u_1f_1}.
\end{align*}
Since $T(0) =T$ and $F_0 = \id$, the expansion $f_1=u_1+t\frac{d}{dt}|_{t=0}f_1+ O(t^2)$ implies ${\int_{T} u_1f_1}=1+O(t)$. Then by \eqref{int-asym}, 
\begin{align*}
\eps(t) = \frac{\int_{T}f_2(f_1-u_1)}{\int_{T} u_1f_1} +tZ +O(t^2).
\end{align*}
Using the same expansion $f_1=u_1+t\frac{d}{dt}|_{t=0}f_1+ O(t^2)$ it implies that $\eps(t) = O(t)$, for small $t$.  By definition of $\eps (t)$, we have 
\begin{align*}
\int_{T} (f_2+\eps f_1)u_1 = 0.
\end{align*}
So we can use $f_2 +\eps f_1$ as a test function for $\lambda_2$,
\begin{align*}
\lambda_2 \leq \frac{-\int_T (f_2+\eps f_1)\Delta_{S^2}(f_2+\eps f_1)}{\int_T (f_2+\eps f_1)^2}.
\end{align*}
Using the asymptotic $\Delta_{S^2} = \Delta_t -tL_1+O(t^2)$, 
\begin{align*}
\frac{-\int_T (f_2+\eps f_1)\Delta_{S^2}(f_2+\eps f_1)}{\int_T (f_2+\eps f_1)^2} &= \frac{-\int_T (f_2+\eps f_1)(\Delta_t-tL_1)(f_2+\eps f_1)+O(t^2)}{\int_T (f_2+\eps f_1)^2}\\
&=\frac{\lambda_2(t)\int_Tf_2^2 + t\int_T f_2L_1f_2 +O(t^2)}{\int_T (f_2+\eps f_1)^2}.
\end{align*}
Since $\int_T f_1f_2 = O(t)$ and $\int_T f_2^2 = 1+O(t)$, we have
\begin{align*}
\lambda_2 \leq \lambda_2(t) +t\int_Tf_2L_1f_2+O(t^2).
\end{align*}
Therefore, combining with \eqref{lambda1} gives
\begin{align*}
\lambda_2-\lambda_1 \leq \lambda_2(t)-\lambda_1(t) + t\left( \int_T f_2L_1f_2 - \int_T u_1L_1 u_1 \right) +O(t^2).
\end{align*}
Using the asymptotics of $f_2$ once more, we have
\begin{align*}
\lambda_2-\lambda_1 \leq \lambda_2(t)-\lambda_1(t) + t\left( \int_T u_2L_1 u_2 - \int_Tu_1L_1u_1 \right) + O(t^2).
\end{align*}
Hence, with the assumption
$$\int_T u_2L_1 u_2- u_1L_1u_1 < 0,$$ 
for small $t$ we have $\lambda_2-\lambda_1 < \lambda_2(t)-\lambda_1(t)$.
\end{proof}


\subsection{Computation for $\int_T u_1 L_1 u_1$}
Using  the explicit expressions for $u_1$ \eqref{eq-u_1} and $L_1$ \eqref{def-L_1}, we have
\begin{align*}
\int_T u_1 L_1 u_1\sqrt{\det g_{S}} =& \frac{4}{\pi}b\int_T u_1 \sin(\theta)\dd_r^2[u_1]\sin(r)drd\theta \hspace{1.35 in} (\I)\\
&+\frac{2}{\pi}b\int_T u_1\sin(\theta)\cot(r)\dd_r[u_1]\sin(r)drd\theta \hspace{0.75 in} (\II)\\
&+\frac{4}{\pi}b \int_T u_1 r\cos(\theta)\csc^2(r) \dd_r\dd_\theta[u_1]\sin(r)drd\theta \hspace{0.4 in} (\III) \\
&+\frac{4}{\pi}b\int_T u_1 r\sin(\theta)\cot(r)\csc^2(r)\dd_\theta^2[u_1]\sin(r)drd\theta \quad (\IV)\\
&+\frac{4}{\pi}a\int_T u_1 \csc^2(r)\dd_\theta^2[u_1]\sin(r)drd\theta. \hspace{1.05 in} (\V)
\end{align*}
Denote $C_1 = \frac{105}{2\pi}$. Now calculating each term, we have for term I
\begin{align*}
\frac{4}{\pi}bC_1\int_0^{\frac{\pi}{2}}\sin^2(2\theta)\sin(\theta)d\theta \int_0^{\frac{\pi}{2}}\sin^3(r)\cos(r)\dd_r^2[\sin^2(r)\cos(r)]dr &= -bC_1\frac{1408}{1575\pi}.
\end{align*}
For term II
\begin{align*}
\frac{2}{\pi}bC_1\int_0^{\frac{\pi}{2}}\sin^2(2\theta)\sin(\theta)d\theta\int_0^{\frac{\pi}{2}}\sin^2\cos^2(r)\dd_r[\sin^2(r)\cos(r)]dr= bC_1 \frac{64}{1575\pi}.
\end{align*}
For term III
\begin{align*}
\frac{4}{\pi}bC_1\int_0^{\frac{\pi}{2}}\sin(2\theta)\cos(\theta)\dd_\theta[\sin(2\theta)]d\theta \int_0^{\frac{\pi}{2}}r\sin(r)\cos(r)\dd_r[\sin^2(r)\cos(r)]dr =bC_1\left(\frac{16}{450} - \frac{448}{3375\pi}\right).
\end{align*}
For term IV
\begin{align*}
\frac{4b}{\pi}C_1 \int_0^{\frac{\pi}{2}}\sin(2\theta)\sin(\theta)\dd_\theta^2[\sin(2\theta)]d\theta \int_0^{\frac{\pi}{2}}r\cos^3(r)\sin^2(r)dr &= bC_1\left(\frac{3328}{3375\pi}-\frac{128}{225}\right).
\end{align*}
For term V
\begin{align*}
\frac{4}{\pi}a\int_0^{\frac{\pi}{2}}\sin(2\theta)\dd_\theta^2[\sin(2\theta)]d\theta\int_0^{\frac{\pi}{2}}\sin^3(r)\cos^2(r)dr = -\frac{8a}{15}C_1.
\end{align*}
Combining, we obtain
\begin{align}
\int_Tu_1L_1u_1dA_{S^2}&=bC_1\left(-\frac{1408}{1575\pi}+\frac{64}{1575\pi}-\frac{448}{3375\pi}+\frac{3328}{3375\pi} + \frac{16}{450}-\frac{128}{225}\right) - \frac{8}{15}aC_1  \nonumber\\
&=-b\frac{28}{\pi}-a\frac{28}{\pi}.
\end{align}

\subsection{Computation  for $\int_T u_2 L_1 u_2$}
By linearity, the second eigenfunction is of the form
\begin{align*}
u_2:= pu^{(1)}_2+qu^{(2)}_2
\end{align*}
with $p^2+q^2=1$ and $u_{2}^{(1)}, u_{2}^{(2)}$ given in~\eqref{eq-u_2}.
Then
\begin{align}  \label{u2L1u2}
\int_T u_2 L_1 u_2 &= p^2\int_T u^{(1)}_2(L_1u^{(1)}_2) + pq\int_T u^{(2)}_2(L_1 u^{(1)}_2) + pq\int_Tu^{(1)}_2(L_1u^{(2)}_2)+q^2\int_T u^{(2)}_2(L_1u^{(2)}_2) \\
&=p^2(b\frac{77}{\pi}+a\frac{44}{\pi}) - pqb\frac{22\sqrt{3}}{\pi} +q^2(b\frac{55}{\pi}+a\frac{88}{\pi}) \nonumber
\end{align}
The details of the computation are shown in the appendix.

Define
\begin{align*}
I &:= -\int_T u_2 (L_1 u_2) + \int_T u_1(L_1u_1) \\
&=b\left(p^2\frac{77}{\pi}-pq\frac{22\sqrt{3}}{\pi}+q^2\frac{55}{\pi}-\frac{28}{\pi}\right) + a\left(p^2\frac{44}{\pi}+q^2\frac{88}{\pi}-\frac{28}{\pi}\right).
\end{align*}

Using $p=\cos(z)$, $q=\sin(z)$ and $a = \sqrt{1-b^2}$,

\begin{align*}
I &= b\left( \frac{27}{\pi}+\cos^2(z)\frac{22}{\pi}-\cos(z)\sin(z)\frac{22\sqrt{3}}{\pi}\right)+\sqrt{1-b^2}\left(\frac{16}{\pi}+\sin^2(z)\frac{44}{\pi}\right)
\end{align*}

To find the minimum over $0\leq z \leq 2\pi$ and $0\leq b \leq 1$, notice that the function $f(x) = Ax+B\sqrt{1-x^2}$ has $f''(x) = -\frac{B}{(1-x^2)^{\frac{3}{2}}} <0$ for $B>0$. Hence for each fixed $z$, any interior critical point of $I$ will be a maximum so the minimum must occur at the boundary ($b=0$ or $b=1$).  The minimum of $\frac{27}{\pi}+\cos^2(z)\frac{22}{\pi}-\cos(z)\sin(z)\frac{22\sqrt{3}}{\pi}$ is $\frac{16}{\pi}$, which is also the minimum of $\frac{16}{\pi}+\sin^2(z)\frac{44}{\pi}$, hence the minimum value is $I=\frac{16}{\pi}$.

Combine with Lemma~\ref{gap-var} this finishes the proof of Theorem~\ref{gap-tri}. 

\begin{remark}
	Note that when $a=1, \ b=0$ or $a=0, b=1$, the  variation is along one side of the equilateral spherical triangle. In both cases the minimum is $\frac{16}{\pi}$. In this case the gap is explicitly given in \eqref{beta-gap}. Namely $\Gamma (T(t)) = \frac{4\pi}{(\frac{\pi}{2}-t)}+10$. Hence $\frac{d}{dt}\Gamma (T(t))|_{t=0} = \frac{16}{\pi}$. 	So the above computation matches up with this direct computation. 	
%
	
\end{remark}

%
%
%
%
%

\appendix

\section{Details for the computation of  $\int_T u_2 L_1 u_2$}
We include here the detailed computation for~\eqref{u2L1u2} which is used for the variation of $\lambda_2(t)$. Recall the second eigenfunctions $u_{2}^{(1)}, u_{2}^{(2)}$ are given in~\eqref{eq-u_2}. Denote $C_2 = \frac{1155}{8\pi}, \ C_3 = \frac{3465}{32\pi}$. 


We first compute the $p^2$ term in \eqref{u2L1u2}:

\begin{align*}
\int_T u^{(1)}_2 L_1 u^{(1)}_2dA_{S^2} =& \frac{4}{\pi}b\int_T u^{(1)}_2 \sin(\theta)\dd_r^2[u^{(1)}_2]\sin(r)drd\theta \hspace{1.35 in} (\text{I})\\
&+\frac{2}{\pi}b\int_T u^{(1)}_2\sin(\theta)\cot(r)\dd_r[u^{(1)}_2]\sin(r)drd\theta \hspace{0.75 in} (\text{II})\\
&+\frac{4}{\pi}b \int_T u^{(1)}_2 r\cos(\theta)\csc^2(r) \dd_r\dd_\theta[u^{(1)}_2]\sin(r)drd\theta \hspace{0.4 in} (\III) \\
&+\frac{4}{\pi}b\int_T u^{(1)}_2 r\sin(\theta)\cot(r)\csc^2(r)\dd_\theta^2[u^{(1)}_2]\sin(r)drd\theta \quad (\IV)\\
&+\frac{4}{\pi}a\int_T u^{(1)}_2 \csc^2(r)\dd_\theta^2[u^{(1)}_2]\sin(r)drd\theta. \hspace{1.05 in} (\V)
\end{align*}

For term I,
\begin{align*}
\frac{4}{\pi}bC_2 &\int_0^{\frac{\pi}{2}}\sin^2(2\theta)\sin(\theta)d\theta\int_0^{\frac{\pi}{2}}(3\cos^5(r)-4\cos^3(r)+\cos(r))\dd_r^2[ (3\cos^5(r)-4\cos^3(r)+\cos(r))] \sin(r)dr\\
&=-bC_2\frac{6656}{5775\pi}.
\end{align*}

For term II,
\begin{align*}
\frac{2}{\pi}bC_2&\int_0^{\frac{\pi}{2}}\sin^2(2\theta)\sin(\theta)d\theta\int_0^{\frac{\pi}{2}} (3\cos^6(r)-4\cos^4(r)+\cos^2(r))\dd_r[ (3\cos^5(r)-4\cos^3(r)+\cos(r))]dr\\
&=bC_2\frac{256}{17325\pi}.
\end{align*}

For term III
\begin{align*}
\frac{2}{\pi}bC_2&\int_0^{\frac{\pi}{2}}\sin(2\theta)\cos(\theta)\dd_\theta[\sin(2\theta)]d\theta \int_0^{\frac{\pi}{2}}r\csc(r)\dd_r[(3\cos^5(r)-4\cos^3(r)+\cos(r))^2]dr\\
&= bC_2\left( \frac{8}{225}-\frac{2816}{23625\pi}\right).
\end{align*}

For term IV
\begin{align*}
\frac{4}{\pi}bC_2&\int_0^{\frac{\pi}{2}}\sin(2\theta)\sin(\theta)\dd_\theta^2[\sin(2\theta)]d\theta \int_0^{\frac{\pi}{2}}r(3\cos^5(r)-4\cos^3(r)+\cos(r))^2\cot(r)\csc(r)dr\\
 &= bC_2\left(\frac{29696}{23625\pi}-\frac{128}{225}\right).
\end{align*}

For term V

\begin{align*}
\frac{4}{\pi}aC_2&\int_0^{\frac{\pi}{2}}\sin(2\theta)\dd_\theta^2[\sin(2\theta)]d\theta\int_0^{\frac{\pi}{2}}\csc(r)(3\cos^5(r)-4\cos^3(r)+\cos(r))^2dr\\
&=-aC_2\frac{32}{105}
\end{align*}

Combining
\begin{equation}\label{e:pp}
\begin{aligned}
\int_T u^{(1)}_2L_1u^{(1)}_2dA_{S^2}&=bC_2\left(-\frac{6656}{5775\pi}+\frac{256}{17325\pi} -\frac{2816}{23625\pi}+\frac{29696}{23625\pi} +\frac{8}{225}-\frac{128}{225} \right) -aC_2\frac{32}{105}\\
&=-b\frac{77}{\pi}-a\frac{44}{\pi}
\end{aligned}
\end{equation}

We then compute the first $pq$ term in \eqref{u2L1u2}:

\begin{align*}
\int_T u^{(2)}_2 L_1 u^{(1)}_2dA_{S^2} =& \frac{4}{\pi}b\int_T u^{(2)}_2 \sin(\theta)\dd_r^2[u^{(1)}_2]\sin(r)drd\theta \hspace{1.35 in} (\I)\\
&+\frac{2}{\pi}b\int_T u^{(2)}_2\sin(\theta)\cot(r)\dd_r[u^{(1)}_2]\sin(r)drd\theta \hspace{0.75 in} (\II)\\
&+\frac{4}{\pi}b \int_T u^{(2)}_2 r\cos(\theta)\csc^2(r) \dd_r\dd_\theta[u^{(1)}_2]\sin(r)drd\theta \hspace{0.4 in} (\III) \\
&+\frac{4}{\pi}b\int_T u^{(2)}_2 r\sin(\theta)\cot(r)\csc^2(r)\dd_\theta^2[u^{(1)}_2]\sin(r)drd\theta \quad (\IV)\\
&+\frac{4}{\pi}a\int_T u^{(2)}_2 \csc^2(r)\dd_\theta^2[u^{(1)}_2]\sin(r)drd\theta. \hspace{1.05 in} (\V)
\end{align*}

For term I,
\begin{align*}
\frac{4}{\pi}b\sqrt{C_2C_3}&\int_0^{\frac{\pi}{2}}\sin(4\theta)\sin(2\theta)\sin(\theta)d\theta\int_0^{\frac{\pi}{2}}\cos(r)\sin^5(r)\dd_r^2[(3\cos^5(r)-4\cos^3(r)+\cos(r))]dr\\
&=b\sqrt{C_2C_3}\frac{8192}{40425\pi}.
\end{align*}

For term II,
\begin{align*}
\frac{2}{\pi}b\sqrt{C_2C_3}&\int_0^{\frac{\pi}{2}}\sin(4\theta)\sin(2\theta)\sin(\theta)d\theta\int_0^{\frac{\pi}{2}}\cos^2(r)\sin^4(r)\dd_r[(3\cos^5(r)-4\cos^3(r)+\cos(r))]dr\\
&=-b\sqrt{C_2C_3}\frac{2048}{121275\pi}.
\end{align*}
For term III

\begin{align*}
\frac{4}{\pi}b\sqrt{C_2C_3}&\int_0^{\frac{\pi}{2}}\sin(4\theta)\cos(\theta)\dd_\theta[\sin(2\theta)]d\theta\int_0^{\frac{\pi}{2}}r\cos(r)\sin^3(r)\dd_r[ (3\cos^5(r)-4\cos^3(r)+\cos(r))]dr\\
& =b\sqrt{C_2C_3}\left( \frac{1936}{11025}-\frac{833536}{3472875\pi} \right)
\end{align*}

For term IV,
\begin{align*}
\frac{4}{\pi}b\sqrt{C_2C_3}&\int_0^{\frac{\pi}{2}} \sin(4\theta)\sin(\theta)\dd_\theta^2[\sin(2\theta)]d\theta \int_0^{\frac{\pi}{2}}r\sin^2(r)(3\cos^7(r)-4\cos^5(r)+\cos^3(r))dr\\
&= b\sqrt{C_2C_3}\left( \frac{188416}{3472875\pi} -\frac{256}{11025}\right).
\end{align*}

For Term V, 
\begin{align*}
\frac{4}{\pi}a\sqrt{C_2C_3}&\int_0^{\frac{\pi}{2}}\sin(4\theta)\dd_\theta^2[\sin(2\theta)]d\theta \int_0^{\frac{\pi}{2}}\sin^3(r)(3\cos^6(r)-4\cos^4(r)+\cos^2(r))dr\\
&=0
\end{align*}

Combining to get
\begin{equation}
\begin{aligned}\label{e:pq1}
\int_T u^{(2)}_2L_1u^{(1)}_2dA_{S^2}&=b\sqrt{C_2C_3}\left(\frac{8192}{40425\pi}-\frac{2048}{121275\pi}-\frac{833536}{3472875\pi}+\frac{188416}{3472875\pi}+\frac{1936}{11025}-\frac{256}{11025}\right)\\
&=b\frac{11\sqrt{3}}{\pi}
\end{aligned}
\end{equation}

Next is the second $pq$ term in \eqref{u2L1u2}:

\begin{align*}
\int_T u^{(1)}_2 L_1 u^{(2)}_2dA_{S^2} =& \frac{4}{\pi}b\int_T u^{(1)}_2 \sin(\theta)\dd_r^2[u^{(2)}_2]\sin(r)drd\theta \hspace{1.35 in} (\I)\\
&+\frac{2}{\pi}b\int_T u^{(1)}_2\sin(\theta)\cot(r)\dd_r[u^{(2)}_2]\sin(r)drd\theta \hspace{0.75 in} (\II)\\
&+\frac{4}{\pi}b \int_T u^{(1)}_2 r\cos(\theta)\csc^2(r) \dd_r\dd_\theta[u^{(2)}_2]\sin(r)drd\theta \hspace{0.4 in} (\III) \\
&+\frac{4}{\pi}b\int_T u^{(1)}_2 r\sin(\theta)\cot(r)\csc^2(r)\dd_\theta^2[u^{(2)}_2]\sin(r)drd\theta \quad (\IV)\\
&+\frac{4}{\pi}a\int_T u^{(1)}_2 \csc^2(r)\dd_\theta^2[u^{(2)}_2]\sin(r)drd\theta. \hspace{1.05 in} (\V)
\end{align*}

For term I,

\begin{align*}
\frac{4}{\pi}b\sqrt{C_2C_3}&\int_0^{\frac{\pi}{2}}\sin(2\theta)\sin(\theta)\sin(4\theta)d\theta \int_0^{\frac{\pi}{2}}(3\cos^5(r)-4\cos^3(r)+\cos(r))\dd_r^2[\cos(r)\sin^4(r)]\sin(r)dr \\
&= b\sqrt{C_2C_3}\frac{2048}{14553\pi}
\end{align*}

For term II,
\begin{align*}
\frac{2}{\pi}b\sqrt{C_2C_3}&\int_0^{\frac{\pi}{2}}\sin(2\theta)\sin(\theta)\sin(4\theta)d\theta \int_0^{\frac{\pi}{2}}(3\cos^6(r)-4\cos^4(r)+\cos^2(r))\dd_r[\cos(r)\sin^4(r) ]dr\\ 
&= b\sqrt{C_2C_3}\frac{1024}{72765\pi}
\end{align*}

For term III,
\begin{align*}
\frac{4}{\pi}b\sqrt{C_2C_3}&\int_0^{\frac{\pi}{2}}\sin(2\theta)\cos(\theta)\dd_\theta[\sin(4\theta)]d\theta\int_0^{\frac{\pi}{2}}r(3\cos^5(r)-4\cos^3(r)+\cos(r))\csc(r)\dd_r[\cos(r)\sin^4(r)]dr\\
&=b\sqrt{C_2C_3}\left(\frac{2704}{11025}-\frac{1291264}{3472875\pi}\right)
\end{align*}

For term IV
\begin{align*}
\frac{4}{\pi}b\sqrt{C_2C_3}&\int_0^{\frac{\pi}{2}}\sin(2\theta)\sin(\theta)\dd_\theta^2[\sin(4\theta)]d\theta\int_0^{\frac{\pi}{2}}r(3\cos^7(r)-4\cos^5(r)+\cos^3(r))\sin^2(r)dr\\
&=b\sqrt{C_2C_3}\left( \frac{753664}{3472875\pi} - \frac{1024}{11025}\right)
\end{align*}

For term V
\begin{align*}
\frac{4}{\pi}a\sqrt{C_2C_3}&\int_0^{\frac{\pi}{2}}\sin(2\theta)\dd_\theta^2[\sin(4\theta)]d\theta\int_0^{\frac{\pi}{2}}(3\cos^6(r)-4\cos^4(r)+\cos^2(r))\sin^3(r)dr \\
&=0
\end{align*}

Combining to get
\begin{equation}\label{e:pq2}
\begin{aligned}
\int_Tu^{(1)}_2L_1u^{(2)}_2dA_{S^2}&=b\sqrt{C_2C_3}\left(\frac{2048}{14553\pi}+\frac{1024}{72765\pi}-\frac{1291264}{3472875\pi}+\frac{753664}{3472875\pi}+\frac{2704}{11025}-\frac{1024}{11025}  \right)\\
&=b\sqrt{C_2C_3}\frac{16}{105}
\end{aligned}
\end{equation}

Last is the $q^2$ term in \eqref{u2L1u2}:
\begin{align*}
\int_T u^{(2)}_2 L_1 u^{(2)}_2dA_{S^2} =& \frac{4}{\pi}b\int_T u^{(2)}_2 \sin(\theta)\dd_r^2[u^{(2)}_2]\sin(r)drd\theta \hspace{1.35 in} (\I)\\
&+\frac{2}{\pi}b\int_T u^{(2)}_2\sin(\theta)\cot(r)\dd_r[u^{(2)}_2]\sin(r)drd\theta \hspace{0.75 in} (\II)\\
&+\frac{4}{\pi}b \int_T u^{(2)}_2 r\cos(\theta)\csc^2(r) \dd_r\dd_\theta[u^{(2)}_2]\sin(r)drd\theta \hspace{0.4 in} (\III) \\
&+\frac{4}{\pi}b\int_T u^{(2)}_2 r\sin(\theta)\cot(r)\csc^2(r)\dd_\theta^2[u^{(2)}_2]\sin(r)drd\theta \quad (\IV)\\
&+\frac{4}{\pi}a\int_T u^{(2)}_2 \csc^2(r)\dd_\theta^2[u^{(2)}_2]\sin(r)drd\theta. \hspace{1.05 in} (\V)
\end{align*}

For term I,
\begin{align*}
\frac{4}{\pi}bC_3&\int_0^{\frac{\pi}{2}}\sin^2(4\theta)\sin(\theta)d\theta\int_0^{\frac{\pi}{2}}\cos(r)\sin^5(r)\dd_r^2[\cos(r)\sin^4(r)]dr\\
&=-bC_3\frac{139264}{218295\pi}
\end{align*}

For term II,
\begin{align*}
\frac{2}{\pi}bC_3&\int_0^{\frac{\pi}{2}}\sin^2(4\theta)\sin(\theta)d\theta\int_0^{\frac{\pi}{2}}\cos^2(r)\sin^4(r)\dd_r[\cos(r)\sin^4(r)]dr\\
&=bC_3\frac{4096}{218295\pi}
\end{align*}

For term III,
\begin{align*}
\frac{4}{\pi}bC_3&\int_0^{\frac{\pi}{2}}\sin(4\theta)\cos(\theta)\dd_\theta[\sin(4\theta)]d\theta \int_0^{\frac{\pi}{2}}r\cos(r)\sin^3(r)\dd_r[\cos(r)\sin^4(r)]dr\\
 & = bC_3\left(\frac{32}{3969}-\frac{45056}{1250235\pi} \right)
\end{align*}

For term IV,
\begin{align*}
\frac{4}{\pi}bC_3&\int_0^{\frac{\pi}{2}}\sin(4\theta)\sin(\theta)\dd_\theta^2[\sin(4\theta)]d\theta \int_0^{\frac{\pi}{2}}r\cos^3(r)\sin^6(r)dr\\
&=bC_3\left(\frac{163840}{250047\pi}-\frac{2048}{3969}\right)
\end{align*}

For term V,
\begin{align*}
\frac{4}{\pi}aC_3&\int_0^{\frac{\pi}{2}}\sin(4\theta)\dd_\theta^2[\sin(4\theta)]d\theta\int_0^{\frac{\pi}{2}}\cos^2(r)\sin^7(r)dr \\&=-aC_3\frac{16^2}{315}
\end{align*}

Combining to get
\begin{equation}\label{e:qq}
\begin{aligned}
\int_T u^{(2)}_2L_1u^{(2)}_2dA_{S^2}&=bC_3\left(-\frac{139264}{218295\pi}+\frac{4096}{218295\pi}-\frac{45056}{1250235\pi}+\frac{163840}{250047\pi}+\frac{32}{3969}-\frac{2048}{3969}\right)-aC_3\frac{16^2}{315}\\
&=-b\frac{55}{\pi}-a\frac{88}{\pi}
\end{aligned}
\end{equation}

Combining the results~\eqref{e:pp},~\eqref{e:pq1},~\eqref{e:pq2} and~\eqref{e:qq} we get~\eqref{u2L1u2}.

\begin{bibdiv}
\begin{biblist}

\bib{AbramowitzStegun}{book} {
     TITLE = {Handbook of mathematical functions with formulas, graphs, and
              mathematical tables},
    EDITOR = {Abramowitz, Milton}
    Editor={Stegun, Irene A.},
      NOTE = {Reprint of the 1972 edition},
 PUBLISHER = {Dover Publications, Inc., New York},
      YEAR = {1992},
     PAGES = {xiv+1046},
      ISBN = {0-486-61272-4},
   MRCLASS = {00A20 (00A22 33-00)},
  MRNUMBER = {1225604},
}

\bibitem{fundamental}
Ben Andrews and Julie Clutterbuck.
\newblock Proof of the fundamental gap conjecture.
\newblock {\em J. Amer. Math. Soc.}, 24(3):899--916, 2011.

\bibitem{BCNSWW}
Theodora Bourni, Julie Clutterbuck, Xuan~Hien Nguyen, Alina Stancu, Guofang
Wei, and Valentina-Mira Wheeler.
\newblock The  vanishing of the fundamental gap of convex domains in
$\mathbb{H}^n$.
\newblock 
arXiv:2005.11784, 2020.

\bibitem{dai2018fundamental}
Xianzhe Dai, Shoo Seto, and Guofang Wei.
\newblock Fundamental gap estimate for convex domains on sphere-- the case $n=
2$.
\newblock {\em To appear in Comm. in Analysis and Geometry, arXiv:1803.01115},
2018.

\bibitem{daifundamental}
Xianzhe Dai, Shoo Seto, and Guofang Wei.
\newblock Fundamental gap comparison.
\newblock In {\em Surveys in Geometric Analysis 2018}, pages 1--16, 2019.

\bibitem{he2017fundamental}
Chenxu He and Guofang Wei.
\newblock Fundamental gap of convex domains in the spheres (with appendix {B}
by {Q}i {S}. {Z}hang).
\newblock {\em Amer. Journal of Math},   142,  no. 4, 1161-1192,  2020.

\bib{lu-rowlett}{article}{
	author={Lu, Zhiqin},
	author={Rowlett, Julie},
	title={The fundamental gap of simplices},
	journal={Comm. Math. Phys.},
	volume={319},
	date={2013},
	number={1},
	pages={111--145},
	issn={0010-3616},
	review={\MR{3034027}},
	doi={10.1007/s00220-013-1670-9},
}

	\bibitem{1}
	F.~W.~J. Olver, A.~B. {Olde Daalhuis}, D.~W. Lozier, B.~I. Schneider,
                R.~F. Boisvert, C.~W. Clark, B.~R. Miller, B.~V. Saunders,
                H.~S. Cohl, and M.~A. McClain, eds.
	\newblock{\it NIST Digital Library of Mathematical Functions},
	\newblock{http://dlmf.nist.gov/, Release 1.0.27 of 2020-06-15.}

\bibitem{seto2019sharp}
Shoo Seto, Lili Wang, and Guofang Wei.
\newblock Sharp fundamental gap estimate on convex domains of sphere.
\newblock {\em Journal of Differential Geometry}, 112(2):347--389, 2019.

\end{biblist}
\end{bibdiv}

\end{document}